
\documentclass[11pt,a4paper]{scrartcl}


\usepackage{a4wide}
\usepackage[USenglish]{babel}
\usepackage{latexsym}
\usepackage{amssymb}
\usepackage{mathrsfs}
\usepackage{graphicx}
\usepackage[latin1]{inputenc}
\usepackage{amsmath}
\usepackage{amsfonts}
\usepackage{amsthm}
\usepackage{url}
\usepackage{mathptmx}
\usepackage{helvet}
\usepackage{dsfont}
\usepackage{setspace}
\usepackage{enumerate}
\usepackage{hyperref}
\hypersetup{
  bookmarksopen=true,
  citebordercolor=0 1 0,
  linkbordercolor=1 0 0,
  menubordercolor=1 0 0,
  }

                                               
 \newcommand{\bfi}{\begin{fig}}
 \newcommand{\efi}{\end{fig}}

 \newcommand{\btab}{\begin{tab}}
 \newcommand{\etab}{\end{tab}}

 \newcommand{\barr}{\begin{array}}
 \newcommand{\earr}{\end{array}}

 \newcommand{\beqq}{\begin{equation}}
 \newcommand{\eeqq}{\end{equation}}

 \newcommand{\beao}{\begin{eqnarray*}}
 \newcommand{\eeao}{\end{eqnarray*}\noindent}

 \newcommand{\beam}{\begin{eqnarray}}
 \newcommand{\eeam}{\end{eqnarray}\noindent}

 \newcommand{\bdis}{\begin{displaymath}}
 \newcommand{\edis}{\end{displaymath}\noindent}

 \newcommand{\ben}{\begin{enumerate}} 
 \newcommand{\een}{\end{enumerate}} 

 \newcommand{\bali}{\begin{align}} 
 \newcommand{\eali}{\end{align}} 

 \newcommand{\bbn}{\mathbb{N}}
 
 \newcommand{\bbr}{\mathbb{R}}
 \newcommand{\bbc}{\mathbb{C}}


\newcommand{\limp}{\stackrel{\mathbb{P}}{\rightarrow}}

 \newcommand{\limd}{\stackrel{w}{\rightarrow}}


 \newcommand{\limv}{\stackrel{v}{\rightarrow}}
 
 \newcommand{\eqd}{\stackrel{\mathscr{D}}{=}}

 
 \newcommand{\nto}{{n\to\infty}}




 \newcommand{\scrb}{{\mathscr{B}}}
 \newcommand{\scre}{{\mathscr{E}}}
 \newcommand{\scrf}{{\mathscr{F}}}

 \newcommand{\scrn}{{\mathscr{N}}}

 \newcommand{\al}{{\alpha}}
 \newcommand{\la}{{\lambda}}

 \newcommand{\vep}{{\varepsilon}}
 \newcommand{\ga}{{\gamma}}

 \newcommand{\si}{{\sigma}}
 \newcommand{\Om}{{\Omega}}





 \newcommand{\bbE}{{\mathbb{E}}}
 
 \newcommand{\bbP}{{\mathbb{P}}}
 
 \newcommand{\ov}{\overline}
 \newcommand{\wh}{\widehat}
 \newcommand{\wt}{\widetilde}


\newtheorem{Theorem}{Theorem}[section]
\newtheorem{Corollary}[Theorem]{Corollary}
\newtheorem{Proposition}[Theorem]{Proposition}
\newtheorem{Lemma}[Theorem]{Lemma}
\newtheorem{Definition}[Theorem]{Definition}
\theoremstyle{definition}
\newtheorem{Remark}[Theorem]{Remark}

\AtBeginDocument{%
}

\numberwithin{equation}{section}
\bibliographystyle{acm}
\frenchspacing


\title{Spectral Representation of Multivariate Regularly Varying L\'evy and CARMA Processes}

\author{Florian Fuchs \thanks{TUM Institute for Advanced Study \& Zentrum Mathematik, Technische Universit\"at M\"unchen, Boltzmannstra\ss e 3, D-85748 Garching, Germany. 
\emph{Email: } \ttfamily{ffuchs@ma.tum.de, www-m4.ma.tum.de}} \and Robert Stelzer \thanks{Institute of Mathematical Finance, Ulm University, Helmholtzstra\ss e 18, D-89081 Ulm, Germany.
\emph{Email: } \ttfamily{robert.stelzer@uni-ulm.de, www.uni-ulm.de/mawi/finmath.html}}}

\date{}

\begin{document}
%
\maketitle
%
\begin{abstract}
 A spectral representation for regularly varying L\'evy processes with index between one and two is established and the properties of the resulting random noise
 are discussed in detail giving also new insight in the $L^2$-case where the noise is a random orthogonal measure. 

 This allows a spectral definition of multivariate regularly varying L\'evy-driven continuous time autoregressive moving average (CARMA) processes. It is shown 
 that they extend the well-studied case with finite second moments and coincide with definitions previously used in the infinite variance case when they apply. 
\end{abstract}
\vspace{0.5cm}
\noindent
\begin{tabbing}
\emph{AMS Subject Classification 2010: }\=Primary:\, 60G10, 60G51 \\  
\> Secondary: \, 62M15, 60G57
\end{tabbing}
\vspace{0.5cm}\noindent\emph{Keywords:} CARMA process, L\'evy process, spectral representation, multivariate regular variation, random noise, random orthogonal measure
%
\section{Introduction}
Being the continuous time analog of the well-known ARMA processes (see e.g. \cite{Brockwelletal1991}), 
continuous time ARMA (CARMA) processes have been extensively studied over the recent years (see e.g. \cite{Brockwell2001, Brockwell2004, Brockwelletal2009, Tauchenetal2006} and
references therein) and widely used in various areas of application like engineering and finance (e.g. \cite{Larssonetal2006, Tauchenetal2006}). 
The advantage of continuous time modeling is that it allows handling irregularly spaced time series and in particular high frequency data 
often appearing in finance. Originally, driving processes of CARMA models were restricted to Brownian motion; however, \cite{Brockwell2001} allowed for L\'evy processes 
which have a finite $r$-th moment for some $r>0$.

In practice multivariate models are necessary in many applications in order to take account of the joint behavior of several times series. 
The multivariate version of the CARMA process (MCARMA) has been introduced in \cite{Marquardtetal2007} where an explicit construction using a state space representation and a spectral 
representation of the driving L\'evy process in the $L^2$-case is given.

For the analysis of many statistical and probabilistic problems in conjunction with various stochastic processes, a significant tool is often provided by the spectral representations of these processes. 
For instance, the spectral representations of symmetric stable processes have successfully been used to solve prediction and interpolation problems 
(see e.g. \cite{Cambanisetal1989b, Hosoya1982}) and to study structural and path properties for certain subclasses of these processes (see e.g. \cite{Cambanisetal1987, Rootzen1978}).

However, in \cite{Marquardtetal2007} a spectral representation of MCARMA processes is only obtained under the assumption that the driving L\'evy process has finite second moments. 
On the contrary, there are important applications  where it seems to be adequate to relax that assumption, see e.g. \cite{Garciaetal2010}, where a stable CARMA$(2,1)$ model is fitted 
to spot prices from the Singapore New Electricity Market. 

The aim of this paper is to introduce multivariate CARMA processes that are driven by a regularly varying L\'evy process and to establish a spectral representation for them.
The latter will be derived from a spectral representation of the underlying L\'evy process which is, apart from the fact that we deal with regularly varying processes which 
are a generalization of $\al$-stable processes, a main difference to the works by Cambanis, Houdr\'e, Makagon, Mandrekar and Soltani 
(see \cite{Cambanisetal1993, Cambanisetal1984, Makagonetal1990}) where spectral representations are deduced directly for moving averages of the underlying stable process. 
Furthermore, we study in detail properties of the corresponding random noise for which so far only existence has been addressed in the literature to the best of our knowledge. 
In this connection we are going to prove that the increments of the noise are neither independently nor stationarily scattered giving also new insight in the $L^2$-case 
where the corresponding random orthogonal measure has hence always (except in the purely Brownian setting) uncorrelated but dependent increments. 
Moreover, it is shown that the random noise inherits moments exactly and always has a L\'evy measure around zero with infinite activity. 
Finally, if the underlying L\'evy process has a moment generating function in a neighborhood of zero then so does its corresponding noise. 

The remainder of this paper is organized as follows. In Section 2 we start with a brief overview of notation and then give a summary of the concept of multivariate regular variation.
The third section derives a spectral representation of regularly varying L\'evy processes followed by a detailed discussion of the properties of the resulting random noise. 
Thereafter, a spectral definiton of multivariate regularly varying CARMA processes is given in the fourth section. We explain that in a sense this spectral representation 
(in the summability sense) is optimal, i.e. it cannot be improved in general to a bona fide spectral representation. 
The last section shows consistency of our definition with the so-called causal MCARMA processes, introduced in \cite{Marquardtetal2007}. 
A brief summary of some results for Fourier transforms on the real line necessary for our proofs can be found in the appendix. 
%
\section{Preliminaries}
\subsection{Notation}
%
Given the real numbers $\bbr$ we use the convention $\bbr_+:=\left.\left(0,\infty\right.\right)$. For the minimum of two real numbers $a,b\in\bbr$ we write shortly $a\wedge b$.
The real and imaginary part of a complex number $z\in\bbc$ is written as $\mathrm{Re}(z)$ and $\mathrm{Im}(z)$, respectively. 
The set of $n\times d$ matrices over the field $\mathbb{K}$ is denoted by $M_{n\times d}(\mathbb{K})$, where $\mathbb{K}\in\left\{\bbr, \bbc\right\}$. 
We set $M_d(\mathbb{K}):=M_{d\times d}(\mathbb{K})$ and define $\mathbb{S}_d(\mathbb{K})$ as the linear subspace of symmetric and Hermitian matrices in the real and complex case, respectively. 
The positive semidefinite cone is denoted by $\mathbb{S}_d^+(\mathbb{K})$, the transpose of $A\in M_{n\times d}(\bbr)$ is written as $A^\prime$, the complex conjugate transpose 
of $A\in M_{n\times d}(\bbc)$ as $A^*$ and the identity matrix in $M_d(\mathbb{K})$ shall be denoted by $\mathrm{I}_d$.

On $\mathbb{K}\in\left\{\bbr, \bbc\right\}$ the Euclidean norm is denoted by $\left|\hspace{0.5mm}\cdot\hspace{0.5mm}\right|$ whereas on $\mathbb{K}^d$ it will be usually 
written as $\left\|\hspace{0.5mm}\cdot\hspace{0.5mm}\right\|$. 
Recall the fact that two norms on a finite dimensional linear space are always equivalent which is why our results remain true if we replace the Euclidean norm by any other norm.
A scalar product on linear spaces is written as $\langle\hspace{0.5mm}\cdot\hspace{0.5mm},\hspace{0.5mm}\cdot\hspace{0.5mm}\rangle$; in $\bbr^d$ and $\bbc^d$, we again usually take the Euclidean one. 
If $X$ and $Y$ are normed linear spaces, let $B(X,Y)$ be the set of bounded linear operators from $X$ into $Y$. On $B(X,Y)$ we will usually use the operator norm which, 
in the case of $Y$ being a Banach space, turns $B(X,Y)$ itself into a Banach space. In particular we always equip $M_{n\times d}(\mathbb{K})=B(\mathbb{K}^d,\mathbb{K}^n)$ 
with the corresponding operator norm if not stated otherwise. If $X$ is a topological space, we denote by $\scrb(X)$ the Borel $\si$-algebra on $X$, that is the smallest $\si$-algebra on $X$ containing all open subsets
of $X$. The Lebesgue measure on $(\bbr^d, \scrb(\bbr^d))$ is written as $\la^d$. 

The collection of all $\mathbb{K}^d$-valued, $\mathbb{K}\in\left\{\bbr, \bbc\right\}$, random variables defined on some probability space 
$(\Om, \scrf, \bbP)$ is written as $L^0(\Om, \scrf, \bbP; \mathbb{K}^d)$. 
For two random variables $X$ and $Y$ the notation $X\eqd Y$ means equality in distribution. 
For $X\in L^0(\Om, \scrf, \bbP; \mathbb{K}^d)$ we say that $X\in L^p(\Om, \scrf, \bbP; \mathbb{K}^d),\,1\leq p<\infty$, if $\bbE[\left\|X\right\|^p]$ is finite. 
If we define the norm $\left\|X\right\|_{L^p}:=(\bbE[\left\|X\right\|^p])^{1/p}$ and, as usual, do not distinguish between random variables and equivalence classes of random variables, 
$L^p(\Om, \scrf, \bbP; \mathbb{K}^d)$ becomes a Banach space. If we consider a sequence of random variables $(X_n)_{n\in\bbn}$, we shall denote convergence in probability of the sequence to some 
random variable $X$ by $X_n\limp X$ and convergence in distribution by $X_n\limd X$.
\subsection{Multivariate Regular Variation}
For the analysis of the tail behavior of stochastic processes, the concept of regular variation is well established. 
For detailed introductions into the different approaches of multivariate regular variation, we refer the reader to \cite{Lindskog2004} and \cite{Resnick2007}.

We start with a definition from \cite{Hultetal2006}.  Let therefore $\limv$ denote vague convergence. It is defined on the one-point uncompactification $\ov{\bbr}^d\backslash\left\{0\right\}$ 
(where $\ov{\bbr}:=[-\infty,\infty]$), which assures that the Borel sets of $\bbr^d$ that are bounded away form the origin can be referred to as the relatively compact sets in the vague topology. 
\begin{Definition}[\textbf{Multivariate Regular Variation}]$~~$
 \begin{enumerate}[(i)]
 \item
 An $\bbr^d$-valued random vector $X$ is called {\rm regularly varying} with index $\al>0$, if there exist a function $l:\bbr\to\bbr$ which is slowly varying at infinity and 
 a non-zero Radon measure $\kappa$ defined on $\scrb(\ov{\bbr}^d\backslash\left\{0\right\})$ with $\kappa(\ov{\bbr}^d\backslash\bbr^d)=0$ such that, as $u\to\infty$, 
 \[u^\al l(u)\bbP(u^{-1}X\in\hspace{0.5mm}\cdot\hspace{0.5mm})\limv\kappa(\hspace{0.5mm}\cdot\hspace{0.5mm})\]
 on $\scrb(\ov{\bbr}^d\backslash\left\{0\right\})$. We write $X\in RV(\al, l, \kappa)$. 
 \item
 Similarly, we call a Radon measure $\nu$ regularly varying, if $\al$, $l$ and $\kappa$ exist as above such that
 $u^\al l(u)\nu(u\hspace{0.5mm}\cdot\hspace{0.5mm})\limv\kappa(\hspace{0.5mm}\cdot\hspace{0.5mm})$ on $\scrb(\ov{\bbr}^d\backslash\left\{0\right\})$
 as $u\to\infty$ and we write $\nu\in RV(\al, l, \kappa)$. 
 \item 
 We say that a complex random vector $X=(X_1,\ldots,X_d)$ is regularly varying if and only if the real random vector
 $(\mathrm{Re}(X_1),\,\mathrm{Im}(X_1),\ldots,\,\mathrm{Re}(X_d),\,\mathrm{Im}(X_d))$ is regularly varying. 
 \item
 A $d$-dimensional stochastic process $(X_t)_{t\in\bbr}$ is called regularly varying with index $\al$ if all its finite dimensional 
 distributions are regularly varying with index $\al$. 
\end{enumerate}
\label{Definition 6.1}
\end{Definition}
Since we consider L\'evy-driven CARMA processes, we recall that a two-sided L\'evy process $L=(L_t)_{t\in\bbr}$ in $\bbr^d$ is determined by its characteristic function in the 
L\'evy-Khintchine form $\bbE\left[e^{i\langle z, L_t\rangle}\right]=\exp{\left\{\left|t\right|\cdot\psi_L(\mathrm{sgn}(t)z)\right\}},\,t\in\bbr$, with
\beqq
 \psi_L(z)=i\langle\ga,z\rangle-\frac{1}{2}\langle z,\Sigma z\rangle+
 \int_{\bbr^d}\left(e^{i\langle z,x\rangle}-1-i\langle z,x\rangle\mathds{1}_{[0,1]}(\left\|x\right\|)\right)\nu(dx),\quad z\in\bbr^d,
\label{Equation 1.2}
\eeqq
where $\ga\in\bbr^d,\,\Sigma\in\mathbb{S}_d^+(\bbr)$ and $\nu$ is a measure on $(\bbr^d, \scrb(\bbr^d))$ satisfying
$\int_{\bbr^d}\big(1\wedge\left\|x\right\|^2\big)\nu(dx)<\infty$ and $\nu(\left\{0\right\})=0$.
The triplet $(\ga, \Sigma, \nu)$ is referred to as the generating triplet of the L\'evy process, $\nu$ is said to be the L\'evy measure of $L$
and $\mathds{1}_{[0,1]}(\left\|x\right\|)$ is called truncation function. The same representation is true for the characteristic function of any infinitely divisible distribution. 
A general introduction to L\'evy processes and infinitely divisible distributions can be found in \cite{Sato1999}. 

The following very useful connection between regular variation of an infinitely divisible random variable and its L\'evy measure exists.
\begin{Theorem}[cf. \cite{Hultetal2006}, Proposition 3.1]$~~$\\
 Let $X$ be an infinitely divisible $\bbr^d$-valued random vector with L\'evy measure $\nu$. Then $X\in RV(\al, l, \kappa)$ if and only if $\nu\in RV(\al, l, \kappa)$. 
\label{Theorem 6.2}
\end{Theorem}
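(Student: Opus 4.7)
My plan is to invoke the L\'evy--It\^o decomposition to separate big from small jumps and then transfer the regular-variation property between $\nu$ and $X$ through the tail behavior of compound Poisson sums. Fix a threshold $\delta>0$ and write $X\eqd Y_\delta+Z_\delta$ as an independent sum, where $Z_\delta=\sum_{i=1}^{N_\delta}J_i$ is compound Poisson with $N_\delta\sim\mathrm{Poisson}(c_\delta)$, $c_\delta:=\nu(\{\left\|x\right\|>\delta\})$, and $J_i$ are i.i.d.\ with law $\nu|_{\{\left\|x\right\|>\delta\}}/c_\delta$, while $Y_\delta$ is infinitely divisible with generating triplet $(\ga,\Sigma,\nu|_{\{\left\|x\right\|\le\delta\}})$. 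Since the L\'evy measure of $Y_\delta$ has bounded support, $Y_\delta$ possesses finite exponential moments (see \cite{Sato1999}, Theorem~25.17), so $u^\al l(u)\bbP(\left\|Y_\delta\right\|>\vep u)\to 0$ for every $\vep>0$, rendering $Y_\delta$ tail-negligible on the $u^{-\al}l(u)^{-1}$ scale.

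For the implication $(\Leftarrow)$, I would first observe that if $\nu\in RV(\al,l,\kappa)$ then the jump distribution is already regularly varying: for any Borel set $A\subset\ov{\bbr}^d\backslash\{0\}$ bounded away from the origin, $uA\subset\{\left\|x\right\|>\delta\}$ eventually, so $u^\al l(u)\bbP(J_1\in uA)=u^\al l(u)\nu(uA)/c_\delta\to\kappa(A)/c_\delta$, i.e.\ $J_1\in RV(\al,l,\kappa/c_\delta)$. The multivariate ``one-big-jump'' principle for i.i.d.\ regularly varying summands then yields $u^\al l(u)\bbP(u^{-1}Z_\delta\in\hspace{0.5mm}\cdot\hspace{0.5mm})\limv\bbE[N_\delta]\cdot\kappa/c_\delta=\kappa$ vaguely on $\scrb(\ov{\bbr}^d\backslash\{0\})$. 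A standard $\vep$-enlargement/Slutsky-type argument, invoking continuity sets of $\kappa$ together with the tail-negligibility of $Y_\delta$, transfers this vague convergence to $X=Y_\delta+Z_\delta$ and establishes $X\in RV(\al,l,\kappa)$.

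For the converse $(\Rightarrow)$, I would run the same $\vep$-enlargement reasoning backwards: negligibility of $Y_\delta$ at the $u^{-\al}l(u)^{-1}$ scale implies $Z_\delta\in RV(\al,l,\kappa|_{\{\left\|x\right\|>\delta\}})$. I would then invert the sum result by conditioning on $N_\delta$: the event $\{N_\delta=1\}$ contributes $c_\delta e^{-c_\delta}\bbP(J_1\in u\hspace{0.5mm}\cdot\hspace{0.5mm})$ at leading order, while contributions from $\{N_\delta\ge 2\}$ are of strictly smaller order because two or more independent large-magnitude jumps whose sum lies in a fixed continuity set bounded away from $0$ contribute only $O((u^{-\al}l(u)^{-1})^2)$. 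This forces $\nu|_{\{\left\|x\right\|>\delta\}}\in RV(\al,l,\kappa|_{\{\left\|x\right\|>\delta\}})$, and letting $\delta\to 0$---which does not alter the vague limit on $\scrb(\ov{\bbr}^d\backslash\{0\})$ since any continuity set bounded away from the origin lies in $\{\left\|x\right\|>\delta\}$ for small enough $\delta$---yields $\nu\in RV(\al,l,\kappa)$.

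The principal technical obstacle is the rigorous justification of the multivariate ``one-big-jump'' principle and of its inversion: one must verify that the probability of two or more simultaneous large-magnitude jumps summing into a prescribed continuity set of $\kappa$ is strictly dominated by the single-jump probability, which is essentially the subexponential nature of regularly varying laws lifted to the multivariate setting. A first upper bound $\bbP(J_1\in uA)\le e^{c_\delta}c_\delta^{-1}\bbP(Z_\delta\in uA)$ from $\bbP(Z_\delta\in uA)\ge\bbP(N_\delta=1,J_1\in uA)$ can be used to bootstrap the control of higher-$k$ multi-jump convolutions. A secondary difficulty lies in the vague-convergence bookkeeping on the punctured space $\ov{\bbr}^d\backslash\{0\}$: transferring vague convergence through the light-tailed $Y_\delta$ via $\vep$-enlargements requires care at boundaries of continuity sets of $\kappa$, exploiting that $\kappa$ charges only countably many spheres.
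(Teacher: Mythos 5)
The paper does not prove this statement; it is quoted from Hult and Lindskog \cite{Hultetal2006}, Proposition 3.1, so there is no in-paper argument to measure yours against. On its own terms, your decomposition $X\eqd Y_\delta+Z_\delta$, the exponential integrability of $Y_\delta$, and the forward direction ($\nu\in RV\Rightarrow J_1\in RV\Rightarrow Z_\delta\in RV$ via the one-big-jump principle, then transfer through the light-tailed $Y_\delta$) are the standard and correct route.

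The converse direction, however, contains a genuine error. In $\bbP(Z_\delta\in uA)=e^{-c_\delta}\sum_{k\ge1}\frac{c_\delta^k}{k!}\bbP(J_1+\cdots+J_k\in uA)$ you assert that only the $k=1$ term contributes at leading order, the terms with $k\ge2$ being $O\big((u^{-\al}l(u)^{-1})^2\big)$. This conflates ``$k\ge2$ jumps occurred'' with ``at least two of them are large.'' For fixed $k\ge2$ the dominant way for $S_k:=J_1+\cdots+J_k$ to land in $uA$ is that exactly one jump lands near $uA$ while the other $k-1$ contribute a total norm of smaller order; this event has probability asymptotic to $k\,\bbP(J_1\in uA)$, i.e.\ the \emph{same} order as the $k=1$ term (and for $c_\delta>1$ the $k=2$ term even dominates the $k=1$ term). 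The correct bookkeeping gives $\bbP(Z_\delta\in uA)\sim e^{-c_\delta}\sum_{k\ge1}\frac{c_\delta^k}{k!}\,k\,\bbP(J_1\in uA)=c_\delta\,\bbP(J_1\in uA)=\nu(uA)$, whereas your version would yield $\nu(uA)\sim e^{c_\delta}\,\bbP(Z_\delta\in uA)$ and hence the limit measure $e^{c_\delta}\kappa$ for $\nu$ --- inconsistent with the constant $\bbE[N_\delta]\,\kappa/c_\delta=\kappa$ that you (correctly) obtain in the forward direction. The repair is standard but must actually be carried out: for a $\kappa$-continuity set $A\subseteq\{\|x\|\ge r\}$ and $\vep>0$ one shows, for each $k$, that $k\,\bbP(J_1\in uA^{-\vep})-e_k(u)\le\bbP(S_k\in uA)\le k\,\bbP(J_1\in uA^{\vep})+e_k(u)$, where $A^{\pm\vep}$ are $\vep$-enlargements/shrinkages and $e_k(u)$ bounds the probability that at least two of the $k$ jumps exceed $\vep u/k$ in norm. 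Your a priori inequality $\bbP(J_1\in uB)\le e^{c_\delta}c_\delta^{-1}\bbP(Z_\delta\in uB)$, combined with Potter bounds for the regularly varying tail of $\|Z_\delta\|$ (available from Step 1 of the converse), makes $\sum_k\frac{c_\delta^k}{k!}e_k(u)=O\big((u^{-\al}l(u)^{-1})^2\big)$; only after summing the \emph{full} series and letting $\vep\to0$ does one obtain $u^\al l(u)\,\nu(uA)\to\kappa(A)$.
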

\noindent
It is then easy to show that analogously a $d$-dimensional L\'evy process $L=(L_t)_{t\in\bbr}$ is regularly varying of index $\al$ if and only if its L\'evy measure $\nu$ 
is regularly varying of index $\al$. Strictly speaking the L\'evy process $L$ is not regularly varying since $L_0\equiv 0$ a.s., but, 
as all other finite dimensional margins are regularly varying, we neglect that inaccuracy. 
%
\section{Multivariate Regularly Varying L\'evy Processes}
In this section we discuss multivariate L\'evy processes which are regularly varying with index between one and two. We shall derive
a spectral representation and discuss properties of the associated random noise.
\subsection{Spectral Representation of Regularly Varying L\'evy Processes}
Let $\scre(\bbr)$ denote the collection of all elementary subsets of $\bbr$, i.e. the ring generated by the semi-ring of half-open bounded intervals $[a,b)$ with $-\infty<a<b<\infty$. 
Since we derive a spectral representation in the summability sense, it will be sufficient to define the associated random noise on $\scre(\bbr)$. 
We call the arising dependently scattered, additive random noises ``random contents'' in order to place emphasis on the fact that we do not necessarily have $\si$-additive set functions.
\begin{Definition}[\textbf{Regularly Varying Random Content}]
 For $\al\in(1,2]$ a $d$-dimensional {\rm regularly varying random content} with index $\al$ is a set function $M:\scre(\bbr)\to L^0(\Om, \scrf, \bbP; \bbc^d)$ satisfying 
 \ben[(i)]
  \item
   $M(A)$ is a complex $d$-dimensional random vector that is regularly varying with index $\al$ for all $A\in\scre(\bbr)$,
  \item
   $M(\bigcup_{i=1}^n A_i)=\sum_{i=1}^n M(A_i)$ a.s. whenever $A_1,\ldots,\,A_n\in\scre(\bbr)$ are pairwise disjoint (i.e. $M$ is additive).  
 \een
\label{Definition 6.3}
\end{Definition}
Integration of simple functions $f=\sum_{i=1}^n f_i\mathds{1}_{A_i}$ (with $f_i\in M_d(\bbc),\,i=1,\ldots,n,\,n\in\bbn$, and $A_i\in\scre(\bbr)$ mutually disjoint) with respect to $M$ is defined by
\[\int_\bbr f\,dM := \sum\limits_{i=1}^n f_i\,M(A_i)\]
which is obviously a complex $d$-dimensional random vector. The integral is linear for simple functions and it is well-defined due to the additivity of $M$. 

In order to extend integration to a more general class of integrands the following theorem will be crucial. Therefore, we define the set 
\[L^\delta(M_{k\times d}(\bbr)):=\left\{f:\bbr\to M_{k\times d}(\bbr)\text{ measurable},\ \int_\bbr\left\|f(t)\right\|^\delta dt<\infty\right\}.\]
\begin{Theorem}
 Let $L=(L_t)_{t\in\bbr}$ be a $d$-dimensional L\'evy process with $\bbE[L_1]=0$ and generating triplet $(\ga,\Sigma,\nu)$ where $\nu$ is regularly varying with index $\al\in(1,2]$. 
 Let $f:\bbr\to M_{k\times d}(\bbr)$ be measurable and $f_n:\bbr\to M_{k\times d}(\bbr)$ be a sequence of measurable functions such that $f_n\to f$ as $\nto$ in $L^\delta(M_{k\times d}(\bbr))$ 
 for some $\delta<\al$. Moreover, assume that $\left\|f_n(s)-f(s)\right\|+\left\|f(s)\right\|\leq C$ for all $n\in\bbn,\,s\in\bbr$ and some constant $C>0$. Then the sequence of integrals 
 $\int_\bbr f_n\,dL$ converges in probability to $\int_\bbr f\,dL$ as $\nto$.
\label{Theorem 6.7}
\end{Theorem}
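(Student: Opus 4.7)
The plan is to reduce convergence in probability to convergence of characteristic functions and to exploit the L\'evy--Khintchine formula (\ref{Equation 1.2}). Concretely, I would show that $\int(f_n-f)\,dL\limp 0$ by L\'evy's continuity theorem, i.e.\ that $\bbE\bigl[e^{i\langle z,\int(f_n-f)\,dL\rangle}\bigr]\to 1$ for every $z\in\bbr^k$. For a simple matrix-valued $g=\sum_j g_j\mathds{1}_{A_j}$ with disjoint $A_j\in\scre(\bbr)$, independence of L\'evy increments and (\ref{Equation 1.2}) give the standard identity
\[\bbE\bigl[e^{i\langle z,\int g\,dL\rangle}\bigr]=\exp\Bigl\{\int_\bbr\psi_L\bigl(g(s)^\prime z\bigr)\,ds\Bigr\}.\]
Because $\bbE[L_1]=0$ and $\al>1$ imply $\int_{\|x\|>1}\|x\|\,\nu(dx)<\infty$ (Theorem~\ref{Theorem 6.2}), one may absorb the truncation into the drift and rewrite
\[\psi_L(w)=-\tfrac{1}{2}\langle w,\Sigma w\rangle+\int_{\bbr^d}\bigl(e^{i\langle w,x\rangle}-1-i\langle w,x\rangle\bigr)\nu(dx).\]

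The technical heart is the estimate
\[|\psi_L(g(s)^\prime z)|\le K_1\|z\|^2\|g(s)\|^2+K_2\|z\|^\delta\|g(s)\|^\delta,\]
which I would derive by bounding the Gaussian contribution by $\tfrac{1}{2}\|\Sigma\|\,\|z\|^2\|g(s)\|^2$ and splitting the jump integral at $\|x\|=1$, using $|e^{iy}-1-iy|\le y^2/2$ on $\{\|x\|\le 1\}$ and $|e^{iy}-1-iy|\le 2|y|^\delta$ (valid for every $\delta\in[1,2]$) on $\{\|x\|>1\}$. The relevant constants are $\int_{\|x\|\le 1}\|x\|^2\nu(dx)<\infty$ from the L\'evy measure property and $\int_{\|x\|>1}\|x\|^\delta\nu(dx)<\infty$ from regular variation of $\nu$ with index $\al>\delta$ (Theorem~\ref{Theorem 6.2}). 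Applied to differences $g_m-g_{m'}$ of simple approximations, the very same estimate shows that $(\int g_m\,dL)_m$ is Cauchy in probability, which extends both the integral and the characteristic-function identity above to the class of bounded functions in $L^\delta(M_{k\times d}(\bbr))$, so in particular to $f$, $f_n$ and $f_n-f$.

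To conclude, I apply the estimate with $g=f_n-f$ and use $\|f_n(s)-f(s)\|\le C$ to interpolate $\|f_n(s)-f(s)\|^2\le C^{2-\delta}\|f_n(s)-f(s)\|^\delta$, obtaining
\[\int_\bbr\bigl|\psi_L\bigl((f_n(s)-f(s))^\prime z\bigr)\bigr|\,ds\le \bigl(K_1 C^{2-\delta}\|z\|^2+K_2\|z\|^\delta\bigr)\int_\bbr\|f_n(s)-f(s)\|^\delta\,ds,\]
which tends to $0$ by hypothesis. Exponentiating yields $\bbE\bigl[e^{i\langle z,\int(f_n-f)\,dL\rangle}\bigr]\to 1$ for each $z\in\bbr^k$, hence $\int(f_n-f)\,dL\limp 0$, and linearity $\int f_n\,dL=\int(f_n-f)\,dL+\int f\,dL$ gives the claim. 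The main obstacle is not the convergence argument itself but the preliminary extension of $\int\cdot\,dL$ and of its exponential characteristic function from simple integrands to the full class of admissible ones; this rests on precisely the same estimate, so the combined argument is internally consistent.
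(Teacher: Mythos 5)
Your proof is correct in substance but follows a genuinely different route from the paper's. The paper also reduces to showing $\int g_n\,dL\limd 0$ with $g_n=f_n-f$, but it works at the level of generating triplets: it writes down $(\ga_n,\Sigma_n,\nu_n)$ for $\int g_n\,dL$ via Rajput--Rosi\'nski, switches to a continuous truncation function, and then verifies the three conditions of Sato's convergence theorem for infinitely divisible laws (vague vanishing of $\nu_n$, vanishing of the modified Gaussian part $\Sigma_{n,\vep}$, and vanishing of the shifted drift $\ga_{n,c}$), all driven by the single estimate $\int_\bbr\int_{\bbr^d}(1\wedge\|g_n(s)x\|^2)\,\nu(dx)\,ds\to 0$. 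You instead bound the L\'evy exponent directly, $|\psi_L(w)|\le K_1\|w\|^2+K_2\|w\|^\delta$, and invoke L\'evy's continuity theorem; this collapses the three triplet conditions into one integral inequality and is arguably the more economical argument, at the price of having to re-justify the characteristic-function identity $\bbE[e^{i\langle z,\int g\,dL\rangle}]=\exp\{\int_\bbr\psi_L(g(s)^\prime z)\,ds\}$ for non-simple integrands --- which the paper gets for free by citing \cite[Theorem 2.7]{Rajputetal1989} (the same source it uses for the triplet formulas), so you could simply cite it rather than rebuild it via your Cauchy-in-probability argument. Both proofs ultimately rest on the same two integrability inputs, $\int_{\{\|x\|\le 1\}}\|x\|^2\,\nu(dx)<\infty$ and $\int_{\{\|x\|>1\}}\|x\|^\delta\,\nu(dx)<\infty$ for $\delta<\al$, together with the interpolation $\|g_n\|^2\le C^{2-\delta}\|g_n\|^\delta$. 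One small gap to close: your bound $|e^{iy}-1-iy|\le 2|y|^\delta$ needs $\delta\ge 1$, whereas the hypothesis only gives \emph{some} $\delta<\al$, possibly below $1$; this is harmless because the uniform bound $\|f_n-f\|\le C$ lets you upgrade $L^\delta$-convergence to $L^{\delta^\prime}$-convergence for any $\delta^\prime\in(1,\al)$ (the paper makes exactly this move when it picks $\xi\in(\delta,\al)$ with $\xi>1$), but you should say so explicitly.
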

\noindent
Note that this continuity result for integrals with respect to L\'evy processes is of general interest of its own. Before we pass on to the proof, we recall a result regarding the existence of these integrals. 
\begin{Theorem}[cf. \cite{Moseretal2010}, Theorem 2.5]$~~$\\
 Let $L=(L_t)_{t\in\bbr}$ be a $d$-dimensional L\'evy process with generating triplet $(\ga,\Sigma,\nu)$, let $\nu$ be regularly varying with index $\al\in(1,2]$ and let $f:\bbr\to M_{k\times d}(\bbr)$ be measurable. 
 Then $f$ is $L$-integrable in the sense of Rajput and Rosi\'nski \cite{Rajputetal1989} if it is bounded, $\bbE[L_1]=0$ and $f\in L^{\delta}(M_{k\times d}(\bbr))$ for some $\delta<\al$.
\label{Theorem 6.5}
\end{Theorem}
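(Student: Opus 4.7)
The plan is to verify the three Rajput-Rosi\'nski conditions \cite{Rajputetal1989} that characterize $L$-integrability. Writing $\tau(x)=x\,\mathds{1}_{[0,1]}(\|x\|)$ for the standard truncation, these read: (I) $\int_\bbr \|f(s)\ga + \int_{\bbr^d}[\tau(f(s)x)-f(s)\tau(x)]\,\nu(dx)\|\,ds < \infty$, (II) $\int_\bbr \|f(s)\Sigma f(s)^\prime\|\,ds<\infty$, and (III) $\int_\bbr\int_{\bbr^d}(1\wedge\|f(s)x\|^2)\,\nu(dx)\,ds<\infty$. Two preparatory observations drive everything: first, $f$ bounded by some $C>0$ together with $f\in L^\delta$ gives $\|f(s)\|^p\le C^{p-\delta}\|f(s)\|^\delta$, so $f\in L^p$ for every $p\in[\delta,\infty]$, and in particular $f\in L^2$ (which immediately handles (II) by $\|f(s)\Sigma f(s)^\prime\|\le\|\Sigma\|\|f(s)\|^2$); second, regular variation of $\nu$ with index $\al$ combined with the L\'evy-measure property yields the Karamata-type estimate $\int_{\|x\|>\eps}\|x\|^\beta\,\nu(dx)<\infty$ for every $\eps>0$ and every $\beta<\al$, alongside the standard $\int_{\|x\|\le 1}\|x\|^2\,\nu(dx)<\infty$.

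For (III) I would use the clean pointwise bound
\[(1\wedge\|f(s)x\|^2)\le\|f(s)\|^2\|x\|^2\,\mathds{1}_{\{\|x\|\le 1\}}+\|f(s)\|^\delta\|x\|^\delta\,\mathds{1}_{\{\|x\|>1\}},\]
using $1\wedge\|y\|^2\le\|y\|^2$ and the operator-norm estimate for the first summand, and the elementary inequality $1\wedge\|y\|^2\le\|y\|^\delta$ (valid for $\delta\in[0,2]$) for the second. Fubini then factorizes the double integral into two products of finite factors ($f\in L^2$ with the small-$x$ Lévy estimate, and $f\in L^\delta$ with the Karamata large-$x$ estimate). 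For (I) the critical simplification uses $\bbE[L_1]=0$ and $\al>1$: the tail $\int_{\|x\|>1}\|x\|\,\nu(dx)<\infty$ is finite and $\ga=-\int_{\|x\|>1}x\,\nu(dx)$. Substituting and collecting terms, the drift and truncation correction telescope to
\[f(s)\ga + \int_{\bbr^d}[\tau(f(s)x)-f(s)\tau(x)]\,\nu(dx) = -\int_{\bbr^d} f(s)x\,\mathds{1}_{\{\|f(s)x\|>1\}}\,\nu(dx).\]
Enlarging $\delta$ if necessary to lie in $[\max(1,\delta_0),\al)$ (nonempty since $\al>1$), one estimates $\|f(s)x\|\mathds{1}_{\{\|f(s)x\|>1\}}\le\|f(s)x\|^\delta\le\|f(s)\|^\delta\|x\|^\delta\mathds{1}_{\{\|x\|>1/C\}}$, and Fubini with the Karamata estimate bounds the outer integral of (I) by a constant multiple of $\int\|f(s)\|^\delta\,ds<\infty$.

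The main obstacle is the accounting in (I): the cancellation between $\ga$ and the large-jump contribution is only available because $\al>1$ (so large jumps are integrable in the first place) and $\bbE[L_1]=0$, and one has to enlarge $\delta$ to dominate the linear growth $\|f(s)x\|$ by a power $\le\delta$ in the Karamata estimate. Once this cancellation is obtained, conditions (II) and (III) collapse to routine operator-norm and Karamata bounds paired with $f\in L^2$ and $f\in L^\delta$.
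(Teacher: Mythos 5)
Your proposal is correct. Note that the paper itself gives no proof of this statement: it is quoted verbatim from Moser and Stelzer (Theorem 2.5 there), so there is no in-paper argument to compare against. Your verification of the three Rajput--Rosi\'nski conditions is the standard and expected route, and in fact it reproduces almost line for line the estimates the paper \emph{does} carry out in its proof of Theorem \ref{Theorem 6.7}: the splitting of $\int\int(1\wedge\|f(s)x\|^2)\,\nu(dx)\,ds$ into a small-jump part controlled by $\|f\|_{L^2}^2\int_{\{\|x\|<1\}}\|x\|^2\,\nu(dx)$ and a large-jump part controlled by $\|f\|_{L^\delta}^\delta\int_{\{\|x\|\ge1\}}\|x\|^\delta\,\nu(dx)$; the use of $\bbE[L_1]=0$, i.e. $\ga=-\int_{\{\|x\|>1\}}x\,\nu(dx)$, to telescope the drift condition down to $\int_{\bbr^d}f(s)x\,\mathds{1}_{\{\|f(s)x\|>1\}}\,\nu(dx)$; and the choice of an exponent $\xi\in(\delta,\al)$ with $\xi>1$ together with the implication $\|f(s)x\|>1\Rightarrow\|x\|>1/C$ to close the estimate via the finite $\xi$-th moment of $\nu$. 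The only cosmetic issue is the unannounced switch from $\delta$ to $\delta_0$ when you enlarge the exponent; otherwise the argument is complete.
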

\begin{proof}[\textbf{Proof of Theorem \ref{Theorem 6.7}.}]$~~$\\
 Note first that the integrals $\int_\bbr f_n\,dL$ and $\int_\bbr f\,dL$ are well-defined due to Theorem \ref{Theorem 6.5}. Letting $g_n:=f_n-f$, we have to show that $\int_\bbr g_n\,dL\limp 0$ 
 which is equivalent to $\int_\bbr g_n\,dL\limd 0$.
 
 Now the distribution of every $\int_\bbr g_n\,dL$ is infinitely divisible and possesses the generating triplet $(\ga_n,\Sigma_n,\nu_n)$ given by 
 (cf. \cite[Theorem 2.7]{Rajputetal1989} and \cite[Theorem 2.4]{Moseretal2010} for a multivariate extension) 
 \[\ga_n = \int_\bbr\left(g_n(s)\ga + \int_{\bbr^d} g_n(s)x \left(\mathds{1}_{[0,1]}(\left\|g_n(s)x\right\|)-\mathds{1}_{[0,1]}(\left\|x\right\|)\right)\nu(dx)\right)ds,\]
 \[\Sigma_n = \int_\bbr g_n(s)\Sigma\,g_n(s)^\prime ds\qquad\text{and}\qquad \nu_n(B)=\int_\bbr\int_{\bbr^d}\mathds{1}_B(g_n(s)x)\,\nu(dx)\,ds,\quad B\in\scrb(\bbr^k_*),\]
 where $\bbr^k_*:=\bbr^k\backslash\left\{0\right\}$ denotes the punctured Euclidean space.

 In order to use \cite[Theorem 8.7]{Sato1999}, we change the truncation function in (\ref{Equation 1.2}) from $\mathds{1}_{[0,1]}(\left\|x\right\|)$ to the continuous truncation function 
 $c(x):=\mathds{1}_{[0,1]}(\left\|x\right\|)+\mathds{1}_{(1,2]}(\left\|x\right\|)(2-\left\|x\right\|)$. Consequently (cf. \cite[Remark 8.4]{Sato1999}) the generating triplet 
 of $\int_\bbr g_n\,dL$ changes to $(\ga_{n,c},\Sigma_n,\nu_n)_c$ where 
 \[\ga_{n,c} = \ga_n + \int_{\bbr^k} x\left(c(x)-\mathds{1}_{[0,1]}(\left\|x\right\|)\right)\nu_n(dx) = \ga_n + \int_{\left\{\left\|x\right\|\in(1,2]\right\}} x\,(2-\left\|x\right\|)\,\nu_n(dx).\]
 The remainder of the proof is dedicated to the verification of conditions (1) - (3) in \cite[Theorem 8.7]{Sato1999}.
 
 To this end we first show that 
 \beqq
  \int_\bbr\int_{\bbr^d}\left(1\wedge\left\|g_n(s)x\right\|^2\right)\nu(dx)\,ds\stackrel{\nto}{\to} 0.
  \label{Equation 6.1}
 \eeqq
 We get
 \begin{align}
  \int_\bbr&\int_{\bbr^d}\left(1\wedge\left\|g_n(s)x\right\|^2\right)\,\nu(dx)\,ds \nonumber \\
  &\qquad=\int_\bbr\int_{\bbr^d}\mathds{1}_{\left\{\left\|g_n(s)x\right\|>1\right\}}\,\nu(dx)\,ds + \int_\bbr\int_{\bbr^d}\left\|g_n(s)x\right\|^2\mathds{1}_{\left\{\left\|g_n(s)x\right\|\leq 1\right\}}\,\nu(dx)\,ds.
 \label{Equation 6.2}
 \end{align}
 Arguing in an analogous manner as in the proof of \cite[Theorem 2.5]{Moseretal2010} and using the assumption that $g_n$ converges to $0$ in $L^\delta(M_{k\times d}(\bbr))$ we deduce that the first term on the
 right-hand side of (\ref{Equation 6.2}) converges to $0$ as $\nto$. The second term on the right hand side of (\ref{Equation 6.2}) can be bounded by
 \begin{align}
  \int_\bbr&\int_{\bbr^d}\left\|g_n(s)x\right\|^2\mathds{1}_{\left\{\left\|g_n(s)x\right\|\leq 1\right\}}\,\nu(dx)\,ds \nonumber \\
   &\leq\int_\bbr\left\|g_n(s)\right\|^2 ds\int_{\left\{\left\|x\right\|<1\right\}}\left\|x\right\|^2 \nu(dx) + \int_\bbr\left\|g_n(s)\right\|^\delta ds\int_{\left\{\left\|x\right\|\geq 1\right\}}\left\|x\right\|^\delta \nu(dx)
    \stackrel{\nto}{\to} 0 \nonumber
 \end{align}
 where we used the fact that the boundedness of the sequence $g_n$ together with the convergence to $0$ in $L^\delta(M_{k\times d}(\bbr))$ implies that $g_n$ converges to $0$ also in $L^2(M_{k\times d}(\bbr))$.
 Moreover, note that the integral $\int_{\left\{\left\|x\right\|\geq 1\right\}}\left\|x\right\|^\delta \nu(dx)<\infty$ due to \cite[Corollary 25.8]{Sato1999}, since $0<\delta<\al$ and hence the L\'evy process has a finite
 $\delta$-th moment. Thus (\ref{Equation 6.1}) is shown. 
 
 Let us now verify condition (1) of \cite[Theorem 8.7]{Sato1999}. Therefore, we show that $\nu_n$ converges in total variation to the zero measure outside of any fixed neighborhood of $0$. 
 Indeed, we obtain for any $U_\vep=\left\{x\in\bbr^k:\,\left\|x\right\|<\vep\right\}$ with $\vep>0$
 \begin{align}
  \nu_n(\bbr^k\backslash U_\vep)&=\la^1\otimes\nu\left(\left\{(s,x)\in\bbr\times\bbr^d:\,\left\|g_n(s)x\right\|\geq\vep\right\}\right) 
  \leq\frac{1}{1\wedge\vep^2} \int_\bbr\int_{\bbr^d}\left(1\wedge\left\|g_n(s)x\right\|^2\right)\nu(dx)\,ds \nonumber 
 \end{align} 
 where the right-hand side converges to $0$ by virtue of (\ref{Equation 6.1}). 
 
 As to condition (2), note first that $\left\|\Sigma_n\right\|\leq\left\|\Sigma\right\|\cdot\int_\bbr\left\|g_n(s)\right\|^2 ds \stackrel{\nto}{\to} 0$ since $g_n$ converges to $0$ in $L^2(M_{k\times d}(\bbr))$ 
 as previously noted. Hence, using again (\ref{Equation 6.1}) we obtain for any $\vep\in(0,1)$, 
 \begin{align}
  \left|\langle z, \Sigma_{n,\vep}z\rangle\right|&\leq\left|\langle z, \Sigma_n z\rangle\right| + \int_{\left\{\left\|y\right\|\leq\vep\right\}}\langle z, y\rangle^2\,\nu_n(dy) \nonumber \\
  &=\left|\langle z, \Sigma_n z\rangle\right| + \int_{\left\{(s,x)\in\bbr\times\bbr^d:\,\left\|g_n(s)x\right\|\leq\vep\right\}}\langle z, g_n(s)x\rangle^2\,(\la^1\otimes\nu)(d(s,x)) \nonumber \\
  &\leq\left\|z\right\|^2\left(\left\|\Sigma_n\right\|+\int_\bbr\int_{\bbr^d}\left(1\wedge\left\|g_n(s)x\right\|^2\right)\nu(dx)\,ds\right)\stackrel{\nto}{\to}0. \nonumber
 \end{align}
 This in particular yields $\lim\limits_{\vep\searrow 0}\limsup\limits_\nto\left|\langle z, \Sigma_{n,\vep}z\rangle\right| = 0$ for all $z\in\bbr^k$. 
 
 Finally we show condition (3), i.e. $\ga_{n,c}\to 0$ as $\nto$. We immediately obtain that 
 \[\int_{\left\{\left\|x\right\|\in(1,2]\right\}} \left\|x\right\| (2-\left\|x\right\|)\,\nu_n(dx)\leq\nu_n\left(\left\{\left\|x\right\|\in(1,2]\right\}\right)\stackrel{\nto}{\to}0\]
 since $\nu_n$ converges in total variation to the zero measure outside of any fixed neighborhood of $0$. 
 The assumption $\bbE[L_1]=0$ implies $\ga = -\int_{\left\{\left\|x\right\|>1\right\}} x\,\nu(dx)$ (cf. \cite[Example 25.12]{Sato1999}). Thus we can choose any $\xi\in(\delta,\al),\,\xi>1$ and get
 \begin{align}
  \left\|\ga_n\right\|&\leq\int_\bbr\bigg\|g_n(s)\ga + \int_{\left\{\left\|x\right\|>1\right\}} g_n(s)x\,\mathds{1}_{\left\{\left\|g_n(s)x\right\|\leq 1\right\}}\,\nu(dx)
   - \int_{\left\{\left\|x\right\|\leq 1\right\}} g_n(s)x\,\mathds{1}_{\left\{\left\|g_n(s)x\right\|>1\right\}}\,\nu(dx)\bigg\| ds \nonumber \\
   &\leq\int_\bbr\int_{\bbr^d}\left\|g_n(s)x\right\|\mathds{1}_{\left\{\left\|g_n(s)x\right\|>1\right\}}\,\nu(dx)\,ds
    \leq\int_\bbr\int_{\bbr^d}\left\|g_n(s)x\right\|^\xi\mathds{1}_{\left\{\left\|x\right\|\geq\frac{1}{C}\right\}}\,\nu(dx)\,ds \nonumber \\
   &\leq C^{\xi-\delta}\int_\bbr\left\|g_n(s)\right\|^\delta ds \int_{\left\{\left\|x\right\|\geq\frac{1}{C}\right\}}\left\|x\right\|^\xi\nu(dx)\stackrel{\nto}{\to}0 \nonumber
 \end{align}
 since $g_n\to 0$ in $L^\delta(M_{k\times d}(\bbr))$. Note again that $\int_{\left\{\left\|x\right\|\geq\frac{1}{C}\right\}}\left\|x\right\|^\xi\nu(dx)$ is finite since $0<\xi<\al$ and hence 
 the underlying L\'evy process has a finite $\xi$-th moment. Together this shows $\ga_{n,c}\to 0$ as $\nto$. 
 
 Now to conclude the proof we can use \cite[Theorem 8.7]{Sato1999} which yields $\int_\bbr g_n\,dL\limd 0$.
\end{proof}
The following theorem is our first main result establishing a spectral representation in the summability sense of regularly varying L\'evy processes.
\begin{Theorem}
 Let $L=(L_t)_{t\in\bbr}$ be a L\'evy process in $\bbr^d$, regularly varying of index $\al\in(1,2]$ and suppose $\bbE[L_1]=0$. Then there is a regularly varying random content 
 $M:\scre(\bbr)\to L^0(\Om, \scrf, \bbP; \bbc^d)$ with index $\al$ such that 
 \[L_t = \mathop{\bbP-\lim}\limits_{\la\to\infty}\int_{-\la}^\la \frac{e^{it\mu}-1}{i\mu}\cdot\left(1-\frac{\left|\mu\right|}{\la}\right) M(d\mu), \quad t\in\bbr,\]
 where $\mathop{\bbP-\lim}$ denotes the limit in probability. The random content $M$ is given by
 \[M(A)=\frac{1}{\sqrt{2\pi}}\int_{-\infty}^\infty\wh{\mathds{1}_A}(\mu)\,L(d\mu),\quad A\in\scre(\bbr),\]
 where $\wh{\mathds{1}_A}$ is the Fourier transform of $\mathds{1}_A$ (see appendix). 
\label{Theorem 6.4}
\end{Theorem}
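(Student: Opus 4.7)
The plan is to define $M$ directly from the Fourier formula, verify that it is a regularly varying random content, and then reduce the spectral identity for $L_t$ to a Fej\'er-kernel smoothing of $\mathds{1}_{[0,t]}$ on the $L$-side, which is dispatched by Theorem~\ref{Theorem 6.7}. For any $A\in\scre(\bbr)$, the Fourier transform $\wh{\mathds{1}_A}$ is bounded and decays like $1/|\mu|$ at infinity, so it lies in $L^\delta(\bbr)$ for every $\delta>1$; picking $\delta\in(1,\al)$ and invoking Theorem~\ref{Theorem 6.5} on its real and imaginary parts shows that $M(A)$ is a well-defined, infinitely divisible $\bbc^d$-valued random vector, and additivity of $M$ is inherited from linearity of the Fourier transform and of the integral on simple integrands. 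The L\'evy measure of $M(A)$ is, by a complex-scalar extension of the image-measure formula recalled in the proof of Theorem~\ref{Theorem 6.7}, the push-forward of $\la^1\otimes\nu$ under $(s,x)\mapsto\wh{\mathds{1}_A}(s)x/\sqrt{2\pi}$; since $\nu$ is regularly varying of index $\al$ and this kernel is homogeneous of degree one in $x$, one verifies that this push-forward is again regularly varying of index $\al$, and Theorem~\ref{Theorem 6.2} then yields $M(A)\in RV(\al,l,\kappa_A)$.

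For the spectral identity, observe that for any simple function $g$ the definition of $M$ directly gives the transfer formula $\int g\,dM=(2\pi)^{-1/2}\int\wh{g}\,dL$. Setting $g_\la(\mu):=\frac{e^{it\mu}-1}{i\mu}(1-|\mu|/\la)\mathds{1}_{[-\la,\la]}(\mu)$, I would extend this identity to $g_\la$ via simple-function approximation, using Theorem~\ref{Theorem 6.7} on the $L$-side. Writing $(e^{it\mu}-1)/(i\mu)=\int_0^t e^{iu\mu}\,du$ and swapping the order of integration yields
\[\frac{1}{\sqrt{2\pi}}\,\wh{g_\la}(s)=\int_0^t K_\la(u-s)\,du=(K_\la*\mathds{1}_{[0,t]})(s),\qquad K_\la(x):=\frac{2\sin^2(\la x/2)}{\pi\la x^2},\]
where $K_\la$ is the Fej\'er kernel on $\bbr$, a symmetric probability density forming an approximate identity as $\la\to\infty$. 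Classical $L^\delta$-convergence of approximate identities gives $K_\la*\mathds{1}_{[0,t]}\to\mathds{1}_{[0,t]}$ in $L^\delta(\bbr)$ for every $\delta\geq 1$, uniformly bounded by $1$, so Theorem~\ref{Theorem 6.7} delivers
\[\int_{-\la}^\la\frac{e^{it\mu}-1}{i\mu}\left(1-\frac{|\mu|}{\la}\right)M(d\mu)=\int_\bbr (K_\la*\mathds{1}_{[0,t]})(s)\,L(ds)\limp L_t.\]

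The main hurdle will be making the transfer identity rigorous for the non-simple integrand $g_\la$, since Definition~\ref{Definition 6.3} only prescribes $\int\,dM$ on simple functions. My plan is to approximate $g_\la$ by piecewise-constant $g_\la^{(n)}$ converging uniformly on $[-\la,\la]$, observing that $\wh{g_\la^{(n)}}$ automatically decays like $1/|s|$ uniformly in $n$ (from the explicit interval-indicator form and uniform boundedness of the coefficients), hence $\wh{g_\la^{(n)}}\to\wh{g_\la}$ in $L^\delta$ with uniform boundedness, and then Theorem~\ref{Theorem 6.7} identifies $\int g_\la\,dM$ as the common limit in probability. A secondary but more routine item is to push the vague limit defining regular variation of $\nu$ through integration in $s$; this amounts to a dominated-convergence argument using the boundedness and tail decay of $\wh{\mathds{1}_A}$.
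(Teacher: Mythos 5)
Your proposal follows essentially the same route as the paper's proof: define $M$ by the Fourier formula, verify well-definedness and regular variation via Theorem \ref{Theorem 6.5} and the push-forward of $\la^1\otimes\nu$ under $(s,x)\mapsto\wh{\mathds{1}_A}(s)x/\sqrt{2\pi}$, extend the transfer identity $\int g\,dM=(2\pi)^{-1/2}\int\wh{g}\,dL$ from simple functions to the triangular-damped kernel by a simple-function approximation whose Fourier transforms decay uniformly like $1/|s|$, identify $\wh{g_\la}$ as the Fej\'er smoothing of $\mathds{1}_{[0,t)}$, and conclude with Theorem \ref{Theorem 6.7}. The one point to tighten is that the uniform $1/|s|$ decay of $\wh{g_\la^{(n)}}$ does not follow from uniform boundedness of the coefficients alone (the naive bound grows with the number of intervals); as in the paper, you need approximants whose total variation is dominated by $\mathrm{Var}(g_\la)$, after which integration by parts (Abel summation) gives $\big|\wh{g_\la^{(n)}}(s)\big|\leq\left|s\right|^{-1}\big(2\sup\big|g_\la^{(n)}\big|+\mathrm{Var}\big(g_\la^{(n)}\big)\big)$ uniformly in $n$.
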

\begin{proof}[\textbf{Proof.}]$~~$\\
 \textbf{Step 1}: We first show that $M$ is well-defined and a regularly varying random content on $\scre(\bbr)$. For $-\infty<a<b<\infty$ we obtain 
 \begin{align}
  \wh{\mathds{1}_{[a,b)}}(\mu)&=\frac{1}{\sqrt{2\pi}}\cdot\frac{e^{-ia \mu}-e^{-ib \mu}}{i\mu}
   =\frac{1}{\sqrt{2\pi}}\left(\frac{\sin\left(b\mu\right)-\sin\left(a\mu\right)}{\mu}+i\cdot\frac{\cos\left(b\mu\right)-\cos\left(a\mu\right)}{\mu}\right),\ \mu\in\bbr, \nonumber
 \end{align}
 which is obviously a bounded element of $L^\delta(\bbc)$ for arbitrary $\delta>1$. This implies that, for any $A\in\scre(\bbr)$ and any $\delta>1$, the Fourier transform $\wh{\mathds{1}_A}$ is bounded and in
 $L^\delta(\bbc)$ and hence $M$ is well-defined by virtue of Theorem \ref{Theorem 6.5}. A simple application of \cite[Theorem 3.2]{Moseretal2010} shows that $M$ is a regularly varying random content 
 with index $\al$. 
 
 \noindent 
 \textbf{Step 2}: Next we want to study integration of more general than simple functions with respect to $M$. For simple functions $f=\sum_{i=1}^n f_i\mathds{1}_{A_i}$ we deduce, 
 using the linearity of the Fourier transformation, the identity
 \beqq
  \int_{-\infty}^\infty f\,dM = \sum\limits_{i=1}^n f_i\,M(A_i) = \frac{1}{\sqrt{2\pi}}\int_{-\infty}^\infty\sum\limits_{i=1}^n f_i\wh{\mathds{1}_{A_i}}\,dL = \frac{1}{\sqrt{2\pi}}\int_{-\infty}^\infty\wh{f}\,dL.
 \label{Equation 6.4}
 \eeqq

 If now $f:\bbr\to M_d(\bbc)$ is an element of $L^p(M_d(\bbc))$ for some $p\in[1,2]$ such that there is a sequence of $\scre(\bbr)$-simple functions $f_n$ satisfying
 \beqq
  \wh{f_n}\stackrel{L^\delta(M_d(\bbc))}{\to}\wh{f}\quad\text{as }\nto\quad\text{for some }\delta<\al\quad\text{and}
  \label{Equation 4.10}
 \eeqq
 \[\big\|\wh{f_n}(\mu)-\wh{f}(\mu)\big\|+\big\|\wh{f}(\mu)\big\|\leq C\quad\text{for all }n\in\bbn,\,\mu\in\bbr\text{ and some constant }C>0\]
 (recall that for almost all $\mu\in\bbr$ the Fourier transform $\wh{f}(\mu)$ is equal to $\lim_{k\to\infty}\frac{1}{\sqrt{2\pi}}\int_{-n_k}^{n_k} e^{-ix\mu} f(x)\,dx$ for a suitably chosen subsequence
 $(n_k)_{k\in\bbn}$, otherwise we use the convention $\wh{f}(\mu)=0$), then we define the integral $\int_{-\infty}^\infty f\,dM$ as the limit in probability of the sequence of simple integrals 
 $\int_{-\infty}^\infty f_n\,dM$. 

 Note that this sequence of integrals is well-defined since every $f_n$ is $\scre(\bbr)$-simple. Since we can always identify $\bbc$ with $\bbr^2$ and $\bbc^d$ with $\left(\bbr^2\right)^d$ and 
 since the multiplication of two complex numbers $x=x_1+ix_2$ and $y=y_1+iy_2$ can be regarded as the (real) matrix-vector multiplication 
 \[\begin{pmatrix}x_1 & -x_2 \\ x_2 & x_1\end{pmatrix}\cdot\begin{pmatrix} y_1 \\ y_2 \end{pmatrix},\]
 it is easy to see that Theorem \ref{Theorem 6.7} holds with functions that take values in the complex $k\times d$ matrices as well. Thus we have 
 \beqq
  \int_{-\infty}^\infty\wh{f_n}\,dL\limp\int_{-\infty}^\infty\wh{f}\,dL
  \label{Equation 6.5}
 \eeqq
 as $\nto$. Using (\ref{Equation 6.4}), we know that $\int_{-\infty}^\infty f_n\,dM = \frac{1}{\sqrt{2\pi}}\int_{-\infty}^\infty\wh{f_n}\,dL$ and hence the sequence of simple integrals $\int_{-\infty}^\infty f_n\,dM$
 converges in probability which shows that $\int_{-\infty}^\infty f\,dM$ is well-defined. Moreover, (\ref{Equation 6.5}) immediately yields 
 \beqq
  \int_{-\infty}^\infty f\,dM = \frac{1}{\sqrt{2\pi}}\int_{-\infty}^\infty\wh{f}\,dL.
  \label{Equation 6.6}
 \eeqq
 We shall call such functions $M$-integrable.
 
 \noindent
 \textbf{Step 3}: Let us now define, for any $-\infty<a<b<\infty$,   
 \[f(\mu):=\frac{e^{ib\mu}-e^{ia\mu}}{i\mu}\quad\text{and}\quad\Phi_\la(\mu):=\left(1-\frac{\left|\mu\right|}{\la}\right)\mathds{1}_{[-\la,\la]}(\mu),\quad\mu\in\bbr.\]
 Then $f\cdot\Phi_\la:\bbr\to\bbc$ is continuous with compact support on $\bbr$. Moreover, note that $f\Phi_\la$ is $M$-integrable. For, writing 
 \[f(\mu)\cdot\Phi_\la(\mu)=\left(\frac{\sin{(b\mu)}-\sin{(a\mu)}}{\mu}-i\cdot\frac{\cos{(b\mu)}-\cos{(a\mu)}}{\mu}\right)\cdot\left(1-\frac{\left|\mu\right|}{\la}\right)
 \mathds{1}_{[-\la,\la]}(\mu),\quad \mu\in\bbr,\]
 one immediately verifies that there is a lower sequence of $\scre(\bbr)$-simple functions such that
 \[\left|f_n\right|\leq\left|f\Phi_\la\right|,\ \mathrm{Var}(f_n)\leq\mathrm{Var}(f\Phi_\la)<\infty\quad\text{and}\quad f_n\stackrel{L^1(\bbc)}{\to} f\Phi_\la\,\text{as }\nto\]
 where $\mathrm{Var}(\hspace{0.5mm}\cdot\hspace{0.5mm})$ denotes the total variation (cf. \cite[proof of Theorem 3.1]{Cambanisetal1993}). We show that $\wh{f_n}\to\wh{f\Phi_\la}$ in $L^\delta(\bbc)$ 
 as $\nto$ for any $\delta\in(1,\al)$. We have
 \begin{align}
  \big\|\wh{f_n}-\wh{f\Phi_\la}\big\|_{L^\delta}^\delta
   &=\int_{-1}^1\big|\wh{f_n}(\mu)-\wh{f\Phi_\la}(\mu)\big|^\delta d\mu + \int_{\left\{\left|\mu\right|>1\right\}}\big|\wh{f_n}(\mu)-\wh{f\Phi_\la}(\mu)\big|^\delta d\mu \nonumber \\
   &\leq 2\cdot\left\|f_n-f\Phi_\la\right\|_{L^1}^\delta + \int_{\left\{\left|\mu\right|>1\right\}}\big|\wh{f_n}(\mu)-\wh{f\Phi_\la}(\mu)\big|^\delta d\mu \label{Equation 4.12}
 \end{align}
 where the first addend vanishes as $\nto$. Integration by parts yields
 \begin{align}
  \big|\wh{f_n}(\mu)\big|
   &\leq\Big|\int_\bbr f_n(s) e^{-i\mu s}\,ds\Big| = \Big|\int_\bbr f_n(s)\,d\Big(-\frac{1}{i\mu}e^{-i\mu s}\Big)(s)\Big|
    \leq \frac{1}{\left|\mu\right|}\Big(2\sup\limits_s\left|f_n(s)\right|+\mathrm{Var}(f_n)\Big) \nonumber \\
   &\leq\frac{1}{\left|\mu\right|}\Big(2\sup\limits_s\left|f(s)\Phi_\la(s)\right| + \mathrm{Var}(f\Phi_\la)\Big) \nonumber
 \end{align}
 and since $\big|\wh{f_n}(\mu)-\wh{f\Phi_\la}(\mu)\big|\leq\left\|f_n-f\Phi_\la\right\|_{L^1}\stackrel{\nto}{\to} 0$ for all $\mu\in\bbr$, we obtain, due to the Dominated Convergence Theorem, 
 that the second term in (\ref{Equation 4.12}) vanishes as well as $\nto$. The additional boundedness condition is obvious and hence $f\Phi_\la$ is indeed $M$-integrable. 
 
 \noindent
 \textbf{Step 4}: We set $g(\mu):=\sqrt{2\pi}\mathds{1}_{[a,b)}(\mu)$ and $h(\mu):=g(-\mu),\,\mu\in\bbr$. Then $\wh{h}=f$ and hence, due to the ``inversion formula'' 
 (see \cite[p. 158]{Katznelson2004}) and Theorem \ref{Theorem 1.36},
 \begin{align}
  \wh{f\Phi_\la}
   &= \wh{\Phi_\la\wh{h}}\overset{(\scriptsize\ref{Equation 1.12})}{\underset{(\scriptsize\ref{Equation 1.19})}{=}}\wh{\wh{F_\la*h}}
    = (F_\la*h)(-\hspace{0.5mm}\cdot\hspace{0.5mm})=F_\la*g\stackrel{L^\delta(\bbc)}{\to} g\quad\text{as }\la\to\infty \nonumber
 \end{align}
 for any $1\leq\delta<\al$. Thus, applying again Theorem \ref{Theorem 6.7}, we deduce
 \begin{align}
  \int_{-\infty}^\infty f\Phi_\la\,dM 
   &\stackrel{(\scriptsize\ref{Equation 6.6})}{=}\frac{1}{\sqrt{2\pi}}\int_{-\infty}^\infty\wh{f\Phi_\la}\,dL = \frac{1}{\sqrt{2\pi}}\int_{-\infty}^\infty (F_\la *g)\,dL \limp\frac{1}{\sqrt{2\pi}}\int_{-\infty}^\infty g\,dL 
    = L_b - L_a\quad\text{as }\la\to\infty \nonumber
 \end{align}
 and the claimed spectral representation for regularly varying L\'evy processes is shown.  \\
\end{proof}
\begin{Remark}$~~$
 \ben[(i)]
  \item
   If the L\'evy process is even symmetric $\al$-stable with $\al\in(1,2)$, then the limit in probability which occurs in the spectral representation of the L\'evy process in Theorem \ref{Theorem 6.4} 
   can be replaced by a limit in $L^p(\Om, \scrf, \bbP; \bbc^d)$ for any $p<\al$. For more details and a comprehensive treatment of $\al$-stable concepts we refer to \cite{Samorodnitskyetal1994}. 
  \item
   The assumption (\ref{Equation 4.10}), which has been used in the second step in order to extend integration with respect to $M$ to more general integrands, is strong. 
   However, as one can observe from Step 3 in the preceding proof, it holds for any continuous function $f$ with compact support on $\bbr$ if $f$ is in addition of bounded variation. All the functions appearing in
   connection with multivariate regularly varying CARMA processes shall be of this type (cf. upcoming Proposition \ref{Proposition 6.9} and Lemma \ref{Lemma 4.5}).
 \een
\label{Remark 4.11}
\end{Remark}
\subsection{Properties of the Resulting Random Content}
Now we discuss distributional and moment properties of the resulting random content $M$ defined in Theorem \ref{Theorem 6.4}. Moreover, we study characteristics of its increments 
and its corresponding L\'evy measure. Regarding the associated noise $M$ there is, apart from a brief treatment of the Fourier transform of a compensated Poisson process in \cite[Example 8.4]{Daleyetal2003}, 
to the best of our knowledge hardly anything in the literature.

Let us first characterize the distribution of the random content $M$ by applying its definition and \cite[Theorem 2.7]{Rajputetal1989}.
\begin{Proposition}
 Let the driving L\'evy process $L$ have generating triplet $(\ga,\Sigma,\nu)$, then the distribution of $M(A)$ is infinitely divisible for any $A\in\scre(\bbr)$ and its generating triplet is 
 $(\ga_{M(A)},\Sigma_{M(A)},\nu_{M(A)})$, where
 \[\ga_{M(A)} = \frac{1}{\sqrt{2\pi}}\int_\bbr\left(\wh{\mathds{1}_A}(s)\ga + \int_{\bbr^d} \wh{\mathds{1}_A}(s)x
  \left(\mathds{1}_{[0,\sqrt{2\pi}]}(\|\wh{\mathds{1}_A}(s)x\|)-\mathds{1}_{[0,1]}(\left\|x\right\|)\right)\nu(dx)\right)ds,\]
 \[\Sigma_{M(A)} = \frac{1}{2\pi}\int_\bbr \wh{\mathds{1}_A}(s)\Sigma\,\wh{\mathds{1}_A}(s)^* ds\quad\text{and}\quad
  \nu_{M(A)}(B)=\int_\bbr\int_{\bbr^d}\mathds{1}_B\left(\frac{1}{\sqrt{2\pi}}\wh{\mathds{1}_A}(s)x\right)\,\nu(dx)\,ds,\ B\in\scrb(\bbc^d_*).\]
 \label{Proposition 6.18}
\end{Proposition}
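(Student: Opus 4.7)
The plan is to apply the multivariate extension of the Rajput--Rosi\'nski integration formula, namely \cite[Theorem 2.4]{Moseretal2010}, directly to the representation $M(A) = \tfrac{1}{\sqrt{2\pi}}\int_\bbr \wh{\mathds{1}_A}(s)\,L(ds)$ established in Theorem \ref{Theorem 6.4}. First I would note that, as already observed in Step~1 of the proof of Theorem \ref{Theorem 6.4}, $\wh{\mathds{1}_A}$ is bounded and lies in $L^\delta(\bbc)$ for every $\delta>1$; combined with the hypothesis $\bbE[L_1]=0$ and the regular variation of $\nu$ with index $\al\in(1,2]$, Theorem \ref{Theorem 6.5} (applied with some $\delta\in(1,\al)$) therefore guarantees that the integral is well-defined in the sense of Rajput and Rosi\'nski, so that the distribution of $M(A)$ is automatically infinitely divisible.

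Since $\wh{\mathds{1}_A}$ takes complex scalar values whereas $L$ is $\bbr^d$-valued, I would invoke the identification $\bbc\cong\bbr^2$, $\bbc^d\cong\bbr^{2d}$ recalled in Step~2 of the proof of Theorem \ref{Theorem 6.4}, under which multiplication by the complex scalar $\wh{\mathds{1}_A}(s)$ is represented by an explicit real $2d\times d$ matrix acting on $\bbr^d$. Setting $f(s):=\tfrac{1}{\sqrt{2\pi}}\wh{\mathds{1}_A}(s)$ and substituting into the triplet formulas provided by \cite[Theorem 2.4]{Moseretal2010} yields at once $\Sigma_{M(A)} = \int_\bbr f(s)\Sigma f(s)^*\,ds$ and $\nu_{M(A)}(B) = \int_\bbr\int_{\bbr^d}\mathds{1}_B(f(s)x)\,\nu(dx)\,ds$, which after pulling out the scalar factors $\tfrac{1}{2\pi}$ and $\tfrac{1}{\sqrt{2\pi}}$ reduce to the expressions displayed in the proposition. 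For the shift vector, the Rajput--Rosi\'nski truncation $\mathds{1}_{[0,1]}(\|f(s)x\|)$ rescales to $\mathds{1}_{[0,\sqrt{2\pi}]}(\|\wh{\mathds{1}_A}(s)x\|)$ because $\|f(s)x\| = \tfrac{1}{\sqrt{2\pi}}\|\wh{\mathds{1}_A}(s)x\|$, and after factoring $\tfrac{1}{\sqrt{2\pi}}$ out of the outer integral one arrives exactly at the stated form of $\gamma_{M(A)}$.

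The only real (and largely bookkeeping) obstacle is the consistent translation between the complex-scalar viewpoint and the real-matrix viewpoint, together with carefully tracking the $\tfrac{1}{\sqrt{2\pi}}$ constant through each component of the characteristic triplet and in particular into the truncation function. Beyond this no further analytic input is needed; infinite divisibility of $M(A)$ and the three formulas are immediate consequences of Theorems \ref{Theorem 6.4} and \ref{Theorem 6.5} combined with \cite[Theorem 2.7]{Rajputetal1989} (via its multivariate version \cite[Theorem 2.4]{Moseretal2010}).
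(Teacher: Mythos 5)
Your proposal is correct and follows exactly the route the paper intends: the paper gives no separate proof for this proposition but states that it follows ``by applying its definition and \cite[Theorem 2.7]{Rajputetal1989}'', which is precisely your application of the (multivariate, complexified) Rajput--Rosi\'nski triplet formula to $M(A)=\tfrac{1}{\sqrt{2\pi}}\int_\bbr \wh{\mathds{1}_A}(s)\,L(ds)$. Your bookkeeping of the factor $\tfrac{1}{\sqrt{2\pi}}$, including the rescaled truncation $\mathds{1}_{[0,\sqrt{2\pi}]}(\|\wh{\mathds{1}_A}(s)x\|)$, reproduces the stated triplet exactly.
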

The next question we cover is whether the increments of $M$ are independently or stationarily scattered.
\begin{Theorem}
 Let $L=(L_t)_{t\in\bbr}$ be a two-sided L\'evy process in $\bbr^d$ with $\bbE[L_1]=0$ and generating triplet $(\ga,\Sigma,\nu)$. Assume moreover that $\nu$ is regularly varying with index 
 $\al\in(1,2]$ and let $M$ be the associated regularly varying random content of Theorem \ref{Theorem 6.4}. Then $M$ is neither independently nor stationarily scattered.
 \label{Theorem 6.14}
\end{Theorem}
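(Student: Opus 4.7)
The two assertions will be proved separately, both using the Rajput-Rosi\'nski representation of $M(A)$ from Proposition \ref{Proposition 6.18} together with its natural bivariate extension for joint integrals. Concretely, for any $A, B \in \scre(\bbr)$ the vector $(M(A), M(B))$ is jointly infinitely divisible and, by \cite[Theorem 2.7]{Rajputetal1989} applied to the vector-valued integrand $(\wh{\mathds{1}_A}, \wh{\mathds{1}_B})$, its L\'evy measure is the pushforward of $\la^1\otimes\nu$ under
\[
  (\mu, x)\longmapsto \frac{1}{\sqrt{2\pi}}\bigl(\wh{\mathds{1}_A}(\mu)\,x,\; \wh{\mathds{1}_B}(\mu)\,x\bigr)\in\bbc^d\times\bbc^d.
\]
I choose the sets $A$, $B$ tailored to each of the two claims.

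\textbf{Non-independence.}
Take the disjoint pair $A = [0, 1)$, $B = [1, 2)$. By the computation in Step 1 of the proof of Theorem \ref{Theorem 6.4} one has $\wh{\mathds{1}_B}(\mu) = e^{-i\mu}\wh{\mathds{1}_A}(\mu)$, so both Fourier transforms vanish only on the countable set $\{2\pi k : k \in \bbz\}$ and are a.e.\ simultaneously nonzero. Since regular variation with $\al > 0$ forces $\nu$ to be a nontrivial Radon measure on $\bbr^d \setminus \{0\}$, I can select a compact $K \subset \bbr^d \setminus \{0\}$ with $\nu(K) > 0$ and a bounded interval $I \subset \bbr$ of positive length on which $|\wh{\mathds{1}_A}|$ and $|\wh{\mathds{1}_B}|$ are bounded below. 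The image of $I \times K$ under the transport map above is then contained in $(\bbc^d\setminus\{0\}) \times (\bbc^d\setminus\{0\})$ and carries positive joint L\'evy mass. Since independence of two components of a jointly infinitely divisible vector is equivalent to the joint L\'evy measure being concentrated on the union of the coordinate hyperplanes (a standard fact; cf.\ \cite{Sato1999}), $M(A)$ and $M(B)$ cannot be independent.

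\textbf{Non-stationarity.}
Use the symmetric set $A = [-T, T)$ for some $T > 0$. A direct computation gives $\wh{\mathds{1}_A}(\mu) = \frac{2\sin(T\mu)}{\mu\sqrt{2\pi}}\in\bbr$, so the integrand defining $M(A)$ is real-valued and $M(A)\in\bbr^d$ almost surely. For any $t \neq 0$, however, $\wh{\mathds{1}_{A+t}}(\mu) = e^{-it\mu}\wh{\mathds{1}_A}(\mu)$ has imaginary part $-\sin(t\mu)\,\tfrac{2\sin(T\mu)}{\mu\sqrt{2\pi}}$, a real function which is nonzero on a set of positive Lebesgue measure. Applying Proposition \ref{Proposition 6.18} to the real-valued Rajput-Rosi\'nski integral $\mathrm{Im}(M(A+t))$ shows that its L\'evy measure is the pushforward of $\la^1\otimes\nu$ under a map which is not a.e.\ zero, hence it is nontrivial. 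Consequently $\mathrm{Im}(M(A+t))$ is not a point mass at $0$, so $\bbP(M(A+t)\in\bbr^d) = \bbP(\mathrm{Im}(M(A+t)) = 0) < 1 = \bbP(M(A)\in\bbr^d)$, ruling out $M(A)\eqd M(A+t)$.

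\textbf{Main obstacle.}
The delicate step is showing that $\mathrm{Im}(M(A+t))$ is not almost surely zero. This rests on the general fact that an infinitely divisible distribution with nontrivial L\'evy measure cannot be a point mass, combined with the observation that $\nu$ itself is nontrivial because the driving process is regularly varying with index $\al \in (1, 2]$. The symmetric choice $A = [-T, T)$ is essential — it produces a purely real Fourier transform and thereby confines $M(A)$ to $\bbr^d \subset \bbc^d$, which is the one-line observation that distinguishes $M(A)$ from $M(A+t)$ distributionally.
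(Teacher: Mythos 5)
Your proposal is correct. The non-independence half is essentially the paper's own argument: both compute the joint L\'evy measure of the pair via the Rajput--Rosi\'nski formula and invoke the characterization that independent components of a jointly infinitely divisible vector force the L\'evy measure onto the union of the coordinate subspaces; the paper merely reduces first to the real parts of the first scalar components (so that \cite[Exercise 12.8]{Sato1999} applies verbatim on $\bbr^2$), while you work with the full $\bbc^d\times\bbc^d$-valued pair --- an inessential difference. The non-stationarity half, however, is a genuinely different and more elementary route. The paper assumes distributional invariance of the entire family $\{M(A)\}$, pushes it through the F\'ejer-kernel spectral representation to deduce $L_t\eqd\int e^{-i\tau\xi}\mathds{1}_{[0,t)}(\xi)\,L(d\xi)$, and then derives the contradiction $\nu(\bbr^d\backslash\{0\})=0$; this requires the limit-in-probability machinery of Theorem \ref{Theorem 6.4} and equality of infinite families. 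You instead refute a single one-dimensional marginal identity $M([-T,T))\eqd M([-T+t,T+t))$ by exhibiting a concrete distributional invariant: the symmetric interval has a purely real Fourier transform, so $M([-T,T))$ is supported on $\bbr^d\subset\bbc^d$, whereas the translate has an imaginary part whose L\'evy measure is nontrivial (hence not a point mass at $0$). Interestingly, the paper's final contradiction rests on the same ``realness'' invariant, only pulled back to the level of $L$ rather than applied to $M$ directly. Your version avoids the interchange of limits entirely and is arguably cleaner; what it does not give you is the stronger structural information the paper's computation happens to expose (the explicit transformed representation of $L$ under a hypothetical shift). All the auxiliary steps you use --- the existence of a compact $K$ with $\nu(K)>0$, the a.e.\ non-vanishing of $\sin(t\mu)\sin(T\mu)/\mu$, and the fact that an infinitely divisible law with nontrivial L\'evy measure cannot be degenerate --- are sound.
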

\begin{proof}[\textbf{Proof.}]
 Assume that the increments of $M=\big(M^{(1)},\ldots,\,M^{(d)}\big)$ were independent such that, in particular, $\mathrm{Re}\,M^{(1)}([a_1,b_1))$ and $\mathrm{Re}\,M^{(1)}([a_2,b_2))$ have to be 
 independent for disjoint intervals $[a_1,b_1)$ and $[a_2,b_2)$. Since $\nu$ is regularly varying, it is by definition non-trivial and thus w.l.o.g. the L\'evy measure of the first component $L^{(1)}$ of $L$, denoted by
 $\nu^{(1)}$, has to be non-trivial. The L\'evy measure of 
 \[\begin{pmatrix}\mathrm{Re}\,M^{(1)}([a_1,b_1)) \\ \mathrm{Re}\,M^{(1)}([a_2,b_2)) \end{pmatrix}
  = \frac{1}{\sqrt{2\pi}}\int_\bbr\begin{pmatrix}\mathrm{Re}\,\wh{\mathds{1}_{[a_1,b_1)}} \\ \mathrm{Re}\,\wh{\mathds{1}_{[a_2,b_2)}}\end{pmatrix}dL^{(1)} =: \int_\bbr g\,dL^{(1)}\]
 is given by (see again \cite[Theorem 2.7]{Rajputetal1989}) $\widetilde{\nu}(B)=\int_\bbr\int_{\bbr}\mathds{1}_B(g(s)x)\,\nu^{(1)}(dx)\,ds$ for any $B\in\scrb(\bbr^2_*).$ Then, under our 
 independence assumption, \cite[Exercise 12.8]{Sato1999} implies that $\widetilde{\nu}$ has to be concentrated on $D:=\left\{(x,y)\in\bbr^2:\,x=0\text{ or }y=0\right\}$. 
 
 But, letting $N:=\left\{s\in\bbr:\,\sin\left(b_1s\right)=\sin\left(a_1s\right)\right\}\cup\left\{s\in\bbr:\,\sin\left(b_2s\right)=\sin\left(a_2s\right)\right\}$, we have
 \begin{align}
  \wt{\nu}\left(\bbr^2\backslash D\right)&=\int_\bbr\int_\bbr\mathds{1}_{\bbr^2\backslash D}(g(s)x)\,\nu^{(1)}(dx)\,ds
   \geq\la^1\otimes\nu^{(1)}\big((\bbr\backslash N)\times(\bbr\backslash\left\{0\right\})\big) = \infty, \nonumber
 \end{align}
 since $N$ is a Lebesgue null set. This obviously gives a contradiction and hence the increments of $M$ cannot be independent.

 We still have to show that the increments are not stationarily scattered either. On the contrary if
 \[\left\{M(A):\,A\in\scre(\bbr)\right\}\eqd\left\{M_\tau(A)=M(A+\tau):\,A\in\scre(\bbr)\right\}\]
 for all $\tau\in\bbr$, then 
 \begin{align}
  &\left\{\int_{-\la}^\la \frac{e^{it\mu}-1}{i\mu}\cdot\left(1-\frac{\left|\mu\right|}{\la}\right) M(d\mu):\,\la>0,\,t\in\bbr\right\} \nonumber \\
  &\qquad\qquad\qquad\eqd\left\{\int_{-\la}^\la \frac{e^{it\mu}-1}{i\mu}\cdot\left(1-\frac{\left|\mu\right|}{\la}\right) M_\tau(d\mu):\,\la>0,\,t\in\bbr\right\}. \nonumber
 \end{align}
 With $f_t(\mu):=\frac{e^{it\mu}-1}{i\mu},\,g_t(\mu)=\sqrt{2\pi}\mathds{1}_{[0,t)}(\mu)$ and the same notations as in the proof of Theorem \ref{Theorem 6.4}, we have
 \begin{align}
  \int_{-\infty}^\infty f_t\Phi_\la(\mu)\,M_\tau(d\mu)&=\int_{-\infty}^\infty f_t\Phi_\la(\mu-\tau)\,M(d\mu) = \frac{1}{\sqrt{2\pi}}\int_{-\infty}^\infty e^{-i\tau\xi}\wh{f_t\Phi_\la}(\xi)\,L(d\xi) \nonumber \\
   &=\frac{1}{\sqrt{2\pi}}\int_{-\infty}^\infty e^{-i\tau\xi}\cdot\left(F_\la*g_t\right)(\xi)\,L(d\xi) \limp \int_{-\infty}^\infty e^{-i\tau\xi}\mathds{1}_{[0,t)}(\xi)\,L(d\xi) \nonumber
 \end{align}
 as $\la\to\infty.$ Thus, for all $\tau\in\bbr$, $\left\{L_t:\,t\in\bbr\right\}\eqd\left\{\int_{-\infty}^\infty e^{-i\tau\xi}\mathds{1}_{[0,t)}(\xi)\,L(d\xi):\,t\in\bbr\right\}.$
 In particular we obtain $L_1\eqd\int_{-\infty}^\infty e^{-i\pi\xi}\mathds{1}_{[0,1)}(\xi)\,L(d\xi)$ and hence 
 \begin{align}
  \nu\big(\bbr^d\backslash\left\{0\right\}\big)&=\int_\bbr\int_{\bbr^d}\mathds{1}_{\bbr^d\backslash\left\{0\right\}}\left(e^{-i\pi s}\mathds{1}_{[0,1)}(s)x\right)\nu(dx)\,ds 
   = \la^1\otimes\nu\big(\left\{0\right\}\times\bbr^d\backslash\left\{0\right\}\big)=0, \nonumber
 \end{align}
 a contradiction, since $\nu$ was supposed to be regularly varying and thus by definition non-trivial.
\end{proof}
\begin{Remark}$~~$
 \begin{enumerate}[(i)]
  \item
   Our proof shows in addition that the corresponding random measures of arbitrary disjoint half-open bounded intervals are always dependent. With a slight modification 
   one can even show that the same is true for arbitrary disjoint elementary sets.
  \item
   Note that the proof is still correct if we replace the assumption that $L$ is regularly varying by $\bbE[\left\|L_1\right\|^2]<\infty$ and $\nu(\bbr^d)\neq 0$. In this case the underlying L\'evy process 
   has finite second moments and a bona fide spectral representation has been derived in \cite{Marquardtetal2007}. It is well-known that the corresponding random noise is then defined for all bounded Borel 
   sets and one obtains a so-called random orthogonal measure (see \cite{Doob1953, Rozanov1967} for comprehensive treatments) with uncorrelated but dependent increments.
  \item
   The assumption that $\nu$ is regularly varying is not explicitly used in the proof, it however assures that $\nu\not\equiv 0$. If this non-triviality is not guaranteed, the result of the proposition 
   becomes incorrect since it is well-known that the corresponding random noise in the standard Brownian case has orthogonal and stationary increments (see e.g. \cite[Section 2.1, Lemma 5]{Arato1982}).
 \end{enumerate}
 \label{Remark 6.15} 
\end{Remark}
In addition the following properties about moments and the local behavior of the L\'evy measure of the random content at zero can be shown. For a definition of the notion of $\delta$-variation see for instance
\cite{Bretagnolle1972}.
\begin{Proposition}
Let $L=(L_t)_{t\in\bbr}$ be a two-sided L\'evy process in $\bbr^d$ with $\bbE[L_1]=0$ and generating triplet $(\ga,\Sigma,\nu)$. Assume moreover that $\nu\in RV(\al, l, \kappa_\nu)$ for some $\al\in(1,2]$ 
and let $M$ be the associated regularly varying random content of Theorem \ref{Theorem 6.4}. Then the process
\[Z_t:=M\left(\left[\left.0,t\right)\right.\right)=\int_{-\infty}^\infty\frac{1-e^{-it\mu}}{2\pi i\mu}\,L(d\mu),\quad t\in\bbr_+,\]
is regularly varying with index $\al$. 

Furthermore we have, for any $t\in\bbr_+$, the following results for $Z_t$ and its corresponding L\'evy measure $\nu_{Z_t}$:
\ben[(i)]
 \item
  $\int_{\left\{\left\|x\right\|\leq 1\right\}}\left\|x\right\|\nu_{Z_t}(dx)=\infty$ and thus $\nu_{Z_t}$ is in particular infinite.
 \item
  $\bbE[\left\|Z_t\right\|^p]<\infty$ for any $0<p<\al$ and $\bbE[\left\|Z_t\right\|^p]=\infty$ for any $p>\al$.
 \item
  For any $\delta\in(1,\al)$ the integral $\int_{\left\{\left\|x\right\|\leq 1\right\}}\left\|x\right\|^\delta\nu(dx)$ is finite iff $\int_{\left\{\left\|x\right\|\leq 1\right\}}\left\|x\right\|^\delta\nu_{Z_t}(dx)$ is finite, 
  i.e. the L\'evy process $L$ has a.s. finite $\delta$-variation if and only if $\Sigma=0$ and $\int_{\left\{\left\|x\right\|\leq 1\right\}}\left\|x\right\|^\delta\nu_{Z_t}(dx)<\infty$. 
  Moreover, if the L\'evy process satisfies in addition $\bbE[\left\|L_1\right\|^\al]<\infty$, then the statement is also true for $\delta=\al$.
 \item
  The implication 
  \[\int_{\left\{\left\|x\right\|\leq 1\right\}}\left\|x\right\|^\delta\nu_{Z_t}(dx)<\infty \Rightarrow \int_{\left\{\left\|x\right\|\leq 1\right\}}\left\|x\right\|^\delta\nu(dx)<\infty\]
  is valid for every $\delta\in(1,2)$.
\een
\label{Proposition 6.13}
\end{Proposition}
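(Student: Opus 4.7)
The entire proposition rests on the explicit L\'evy measure formula from Proposition \ref{Proposition 6.18}, which for $A=[0,t)$ specializes to
\[\nu_{Z_t}(B) \;=\; \int_\bbr\int_{\bbr^d}\mathds{1}_B\bigl(h(s)x\bigr)\,\nu(dx)\,ds,\qquad h(s)\,:=\,\frac{1-e^{-its}}{2\pi i s},\]
together with two pointwise facts: $|h(s)| = |\sin(ts/2)|/(\pi|s|)$ is bounded by $t/(2\pi)$ uniformly on $\bbr$ and decays like $1/|s|$ at infinity; in particular $\int_\bbr|h(s)|^\delta\,ds$ is finite iff $\delta>1$. The regular variation of $Z_t$ with index $\al$ is immediate from Step~1 of the proof of Theorem \ref{Theorem 6.4}, which already established this for every $M(A)$.

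For (i) I would swap the order of integration and restrict the $x$-integral to a bounded annulus $A\subseteq\{c\le\|x\|\le C\}$ with $\nu(A)>0$ (which exists because $\nu$ is regularly varying, hence nontrivial). For $x\in A$ the half-line $\{|s|\ge C/\pi\}$ lies in $\{|h(s)|\|x\|\le 1\}$, so the classical divergence $\int_1^\infty|\sin u|/u\,du=\infty$ forces the inner integral to blow up. For (ii) the regular variation of $Z_t$ itself implies that $\bbP(\|Z_t\|>u)$ is regularly varying of index $-\al$; the standard tail-moment dictionary then yields $\bbE[\|Z_t\|^p]<\infty$ for $p<\al$ and $=\infty$ for $p>\al$.

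For (iv) the uniform bound $|h(s)|\le t/(2\pi)$ renders the indicator in the defining formula identically $1$ on the disc $\{\|x\|\le 2\pi/t\}$, so
\[\infty\;>\;\int_{\{\|y\|\le 1\}}\|y\|^\delta\,\nu_{Z_t}(dy)\;\ge\;\Bigl(\int_\bbr|h(s)|^\delta\,ds\Bigr)\int_{\{\|x\|\le 2\pi/t\}}\|x\|^\delta\,\nu(dx);\]
for $\delta\in(1,2)$ the spectral prefactor is finite, and the remaining shell $\{2\pi/t<\|x\|\le 1\}$ is controlled by the finiteness of any L\'evy measure away from the origin. This also supplies the $(\Leftarrow)$ direction of (iii). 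For the $(\Rightarrow)$ direction one splits the $x$-integration at $\|x\|=1$; on $\{\|x\|\le 1\}$ the indicator is dropped and the contribution is bounded by $\bigl(\int|h|^\delta\,ds\bigr)\bigl(\int_{\{\|x\|\le 1\}}\|x\|^\delta\nu(dx)\bigr)$, both finite.

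The main obstacle is the $\{\|x\|>1\}$ contribution in the $(\Rightarrow)$ direction of (iii): the indicator confines $s$ to $\{|h(s)|\le 1/\|x\|\}$, and a constant bound on $|h(s)|^\delta$ is too crude. My plan is a H\"older-type interpolation: choose $\epsilon\in(1,\delta)$ and exploit $|h(s)|^{\delta-\epsilon}\le\|x\|^{-(\delta-\epsilon)}$ on that set to get
\[\int_{\{|h(s)|\le 1/\|x\|\}}|h(s)|^\delta\,ds \;\le\; \|x\|^{-(\delta-\epsilon)}\int_\bbr|h(s)|^\epsilon\,ds,\]
which collapses the large-$x$ piece to a constant multiple of $\int_{\{\|x\|>1\}}\|x\|^\epsilon\,\nu(dx)$, finite by \cite[Corollary 25.8]{Sato1999} since $\epsilon<\al$. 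The equivalence with finite $\delta$-variation of $L$ then invokes Bretagnolle's criterion \cite{Bretagnolle1972}. The extension to $\delta=\al$ uses the extra assumption $\bbE[\|L_1\|^\al]<\infty$ precisely to keep Sato's corollary available at the endpoint and to reconstruct the Bretagnolle characterisation there.
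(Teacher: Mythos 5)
Your overall strategy is the paper's: Fubini on the explicit Lévy measure $\nu_{Z_t}=\int_\bbr\int_{\bbr^d}\mathds{1}_{\cdot}(h(s)x)\,\nu(dx)\,ds$ with $h(s)=\frac{1-e^{-its}}{2\pi is}$, the pointwise facts $|h(s)|=\frac{|\sin(ts/2)|}{\pi|s|}\le\frac{t}{2\pi}$ and $\int_\bbr|h|^\delta\,ds<\infty$ iff $\delta>1$, Sato's Corollary 25.8 for the moment conditions, and Bretagnolle for the $\delta$-variation reformulation. Parts (i), (ii) and (iv) are correct and match the paper up to cosmetic choices (your annulus argument for (i) versus the paper's observation that the inner $s$-integral diverges for \emph{every} $y\neq 0$; your lower bound over $\{\|x\|\le 2\pi/t\}$ plus the outer shell versus the paper's inclusion $\{\|y\|\le\pi|\mu|\}\subseteq\{|h(\mu)|\|y\|\le 1\}$). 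In the forward direction of (iii) your H\"older interpolation on $\{\|x\|>1\}$ is valid but unnecessary: the paper simply bounds the inner integral by the constant $C(\delta)=\int_\bbr|h(s)|^\delta ds$ uniformly in $x$ and then uses $\int_{\{\|y\|>1\}}\|y\|^\delta\nu(dy)<\infty$ for $\delta<\al$ (respectively for $\delta=\al$ under the extra moment hypothesis). Amusingly, your interpolation, which only needs $\int_{\{\|x\|>1\}}\|x\|^\epsilon\nu(dx)<\infty$ for some $\epsilon\in(1,\delta)$, would give the forward implication at $\delta=\al$ \emph{without} assuming $\bbE[\|L_1\|^\al]<\infty$; your closing sentence attributing the need for that assumption to "keeping Sato's corollary available" is therefore inconsistent with your own method, though harmless.

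The one genuine gap is the opening claim. Step 1 of the proof of Theorem \ref{Theorem 6.4} only establishes that each single $M(A)$ is a regularly varying random vector; by Definition \ref{Definition 6.1}(iv), regular variation of the \emph{process} $(Z_t)_{t\in\bbr_+}$ requires all finite-dimensional distributions $(Z_{t_1},\ldots,Z_{t_m})$ to be jointly regularly varying, and this does not follow from marginal regular variation. The paper handles it by stacking the kernels into $g_{t_1,\ldots,t_m}(\mu)=\big(\wh{\mathds{1}_{[0,t_1)}}(\mu)\mathrm{I}_d,\ldots,\wh{\mathds{1}_{[0,t_m)}}(\mu)\mathrm{I}_d\big)^{\!*}$, checking $g_{t_1,\ldots,t_m}\in L^\al$ and the non-degeneracy condition on $\kappa_\nu$, and applying \cite[Theorem 3.2]{Moseretal2010} to the joint integral; you need this (or an equivalent joint argument) to get the headline statement. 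Note that part (ii) survives your shortcut, since there only the one-dimensional margin $Z_t=M([0,t))$ is needed.
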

\begin{proof}[\textbf{Proof.}]
 We first show that $(Z_t)_{t\in\bbr_+}$ is regularly varying with index $\al$, i.e. all finite dimensional margins are regularly varying with index $\al$. Let $(t_1,\ldots,t_m)^\prime\in\bbr_+^m$ and observe that 
 \begin{align}
  \begin{pmatrix} Z_{t_1} \\ \vdots \\ Z_{t_m} \end{pmatrix}
   &=\frac{1}{\sqrt{2\pi}}
    \begin{pmatrix} \int_{-\infty}^\infty \wh{\mathds{1}_{[0,t_1)}}(\mu)\,L(d\mu) \\ \vdots \\ \int_{-\infty}^\infty \wh{\mathds{1}_{[0,t_m)}}(\mu)\,L(d\mu) \end{pmatrix}
    = \frac{1}{\sqrt{2\pi}}\int_{-\infty}^\infty g_{t_1,\ldots,t_m}(\mu)\,L(d\mu) \nonumber
 \end{align}
 where $g_{t_1,\ldots,t_m}:\bbr\to M_{md\times d}(\bbc)$ is defined by 
 \[g_{t_1,\ldots,t_m}(\mu):= \begin{pmatrix} \wh{\mathds{1}_{[0,t_1)}}(\mu)\mathrm{I}_d \\ \vdots \\ \wh{\mathds{1}_{[0,t_m)}}(\mu)\mathrm{I}_d \end{pmatrix}.\]
 Since we obviously have $g_{t_1,\ldots,t_m}\in L^\al(M_{md\times d}(\bbc))$ and $\kappa_\nu\left(g_{t_1,\ldots,t_m}^{-1}(\mu)(\bbc^{md}\backslash\left\{0\right\})\cap\bbr^d\right)=0$ does not hold 
 for almost every $\mu$, a simple application of \cite[Theorem 3.2]{Moseretal2010} shows that the process $(Z_t)_{t\in\bbr_+}$ is regularly varying of index $\al$. This also implies (ii).
 
 Recall that the L\'evy measure of $Z_t$, identifying again $\bbc$ with $\bbr^2$ and $\bbc^d$ with $\left(\bbr^2\right)^d$, is given by
 \[\nu_{Z_t}(A)=\int_\bbr\int_{\bbr^d}\mathds{1}_A\left(\frac{1-e^{-it\mu}}{2\pi i\mu}x\right)\nu(dx)\,d\mu = f(\la^1\otimes\nu)(A),\quad A\in\scrb\big(\bbc^d_*\big),\]
 where $f:\bbr\times\bbr^d\to\bbc^d,\,f(\mu,x):=\frac{1-e^{-it\mu}}{2\pi i\mu}x$ and $f(\la^1\otimes\nu)$ denotes the image measure of $\la^1\otimes\nu$ by $f$ on $(\bbc^d, \scrb(\bbc^d))$. 

 Now, for any $\delta\in[1,2)$, we observe
 \begin{align}
  \int_{\left\{\left\|x\right\|\leq 1\right\}}
   &\left\|x\right\|^\delta\nu_{Z_t}(dx) = \int_{\left\{(\mu,y)\in\bbr\times\bbr^d:\ \frac{\sqrt{1-\cos{(t\mu)}}}{\left|\mu\right|}\left\|y\right\|\leq\sqrt{2}\pi\right\}}
    \left(\frac{\left\|y\right\|}{\sqrt{2}\pi}\right)^\delta\frac{(1-\cos{(t\mu)})^{\frac{\delta}{2}}}{\left|\mu\right|^\delta}\,(\la^1\otimes\nu)(d(\mu,y)) \nonumber \\
   &=\left(\frac{1}{\sqrt{2}\pi}\right)^\delta\cdot\int_{\bbr^d\backslash\left\{0\right\}}\left\|y\right\|^\delta\left(\int_{\left\{\mu\in\bbr:\ \frac{\sqrt{1-\cos{(t\mu)}}}
    {\left|\mu\right|}\left\|y\right\|\leq\sqrt{2}\pi\right\}}\frac{(1-\cos{(t\mu)})^{\frac{\delta}{2}}}{\left|\mu\right|^\delta}\,d\mu\right)\nu(dy) \label{Equation 2.4}
 \end{align}
 due to Fubini's Theorem. For $\delta=1$ the inner integral in (\ref{Equation 2.4}) is infinite for all $y\in\bbr^d\backslash\left\{0\right\}$ and we deduce
 $\int_{\left\{\left\|x\right\|\leq 1\right\}}\left\|x\right\|\nu_{Z_t}(dx)=\infty.$ Thus (i) is shown.
  
 Let now $\delta\in(1,\al)$ and assume $\int_{\left\{\left\|x\right\|\leq 1\right\}}\left\|x\right\|^\delta\nu(dx)<\infty$. Note that the inner integral in (\ref{Equation 2.4}) can be bounded by
 $\int_\bbr\frac{(1-\cos{(t\mu)})^{\frac{\delta}{2}}}{\left|\mu\right|^\delta}\,d\mu=:C(\delta)<\infty$ and thus (\ref{Equation 2.4}) becomes
 \begin{align}
  \int_{\left\{\left\|x\right\|\leq 1\right\}}&\left\|x\right\|^\delta\nu_{Z_t}(dx)\leq C(\delta)\cdot\left(\frac{1}{\sqrt{2}\pi}\right)^\delta\cdot\left(\int_{\left\{\left\|y\right\|\leq 1\right\}}
   \left\|y\right\|^\delta\nu(dy) + \int_{\left\{\left\|y\right\|>1\right\}}\left\|y\right\|^\delta\nu(dy)\right). \nonumber
 \end{align}
 The first integral on the right-hand side is finite by assumption and the second integral is finite as well since $1<\delta<\al$ and hence the underlying L\'evy process has a finite $\delta$-th moment 
 (cf. \cite[Corollary 25.8]{Sato1999}). Hence, the integral $\int_{\left\{\left\|x\right\|\leq 1\right\}}\left\|x\right\|^\delta\nu_{Z_t}(dx)$ is finite.

 If in addition $\bbE[\left\|L_1\right\|^\al]<\infty$ holds, then it is obvious that finiteness of $\int_{\left\{\left\|x\right\|\leq 1\right\}}\left\|x\right\|^\al\nu(dx)$ still implies that 
 $\int_{\left\{\left\|x\right\|\leq 1\right\}}\left\|x\right\|^\al\nu_{Z_t}(dx)$ is also finite. 
 
 Conversely, let $\delta\in(1,2)$ and assume that $\int_{\left\{\left\|x\right\|\leq 1\right\}}\left\|x\right\|^\delta\nu_{Z_t}(dx)<\infty$. Then (cf. (\ref{Equation 2.4})) 
 \begin{align}
  \int_{\left\{\left\|x\right\|\leq 1\right\}}&\left\|x\right\|^\delta\nu_{Z_t}(dx) = \left(\frac{1}{\sqrt{2}\pi}\right)^\delta\int_\bbr\frac{(1-\cos{(t\mu)})^{\frac{\delta}{2}}}{\left|\mu\right|^\delta}
   \int_{\left\{y\in\bbr^d\backslash\left\{0\right\}:\ \frac{\sqrt{1-\cos{(t\mu)}}}{\left|\mu\right|}\left\|y\right\|\leq\sqrt{2}\pi\right\}}\left\|y\right\|^\delta\nu(dy)\,d\mu \nonumber
 \end{align}
 and due to $\left\{y\in\bbr^d\backslash\left\{0\right\}:\ \left\|y\right\|\leq\pi\cdot\left|\mu\right|\right\}\subseteq\Big\{y\in\bbr^d\backslash\left\{0\right\}:\ \frac{\sqrt{1-\cos{(t\mu)}}}{\left|\mu\right|}
 \left\|y\right\|\leq\sqrt{2}\pi\Big\}$ for all $\mu\neq 0$, we deduce
 \begin{align}
  \int_{\left\{\left\|x\right\|\leq 1\right\}}\left\|x\right\|^\delta\nu_{Z_t}(dx)
  &\geq\left(\frac{1}{\sqrt{2}\pi}\right)^\delta\cdot\int_{\left\{\left|\mu\right|\geq 1\right\}}\frac{(1-\cos{(t\mu)})^{\frac{\delta}{2}}}{\left|\mu\right|^\delta}
   \left(\int_{\left\{\left\|y\right\|\leq\pi\left|\mu\right|\right\}}\left\|y\right\|^\delta\nu(dy)\right)d\mu \nonumber \\
  &\geq\left(\frac{1}{\sqrt{2}\pi}\right)^\delta\cdot\int_{\left\{\left|\mu\right|\geq 1\right\}}\frac{(1-\cos{(t\mu)})^{\frac{\delta}{2}}}{\left|\mu\right|^\delta}\,d\mu\cdot
   \int_{\left\{\left\|y\right\|\leq 1\right\}}\left\|y\right\|^\delta\nu(dy). \nonumber
 \end{align}
 The first integral on the right-hand side is strictly positive and finite since $\delta>1$. Hence we obtain $\int_{\left\{\left\|y\right\|\leq 1\right\}}\left\|y\right\|^\delta\nu(dy)<\infty$ and (iv) is shown.
 
 It is well-known that, for any $\delta\in(1,2)$, $L$ has a.s. finite $\delta$-variation if and only if $\Sigma=0$ and $\int_{\left\{\left\|x\right\|\leq 1\right\}}\left\|x\right\|^\delta\nu(dx)<\infty$
 (cf. \cite[Theorem IIIb]{Bretagnolle1972}). This yields the additional statement of (iii) and completes the proof of the proposition.
\end{proof}
\begin{Remark}
 If $L$ is assumed to be symmetric $\al$-stable with $\al\in(1,2)$, then $(Z_t)_{t\in\bbr_+}$ becomes itself a symmetric $\al$-stable stochastic process. In this case one can again show that, for any $\delta\in(0,2)$
 and $t\in\bbr_+$, the integral $\int_{\left\{\left\|x\right\|\leq 1\right\}}\left\|x\right\|^\delta\nu(dx)$ is finite if and only if the integral $\int_{\left\{\left\|x\right\|\leq 1\right\}}\left\|x\right\|^\delta\nu_{Z_t}(dx)$
 is finite which is in turn the case if and only if $\delta>\al$ (cf. \cite[Theorem 14.3]{Sato1999}).
\label{Remark 6.16}
\end{Remark}
We conclude this section by the following version of Proposition \ref{Proposition 6.13} for the case where the underlying L\'evy process has finite second moments.
\begin{Proposition}
 Let $L=(L_t)_{t\in\bbr}$ be a two-sided square-integrable L\'evy process in $\bbr^d$ such that $\bbE[L_1]=0$ and let $M$ be the corresponding random orthogonal measure of Theorem \ref{Theorem 6.4} 
 (cf. Remark \ref{Remark 6.15}). Moreover, letting $(\ga,\Sigma,\nu)$ the generating triplet of $L$, we assume that $\nu\not\equiv 0$. Then, for all $t\in\bbr_+$, we have the following results for 
 $Z_t:=M\left(\left[\left.0,t\right)\right.\right)=\int_{-\infty}^\infty\frac{1-e^{-it\mu}}{2\pi i\mu}\,L(d\mu)$:
 \ben[(i)]
  \item
   $\int_{\left\{\left\|x\right\|\leq 1\right\}}\left\|x\right\|\nu_{Z_t}(dx)=\infty$ and thus $\nu_{Z_t}$ is in particular infinite.
  \item
   For any $\delta\in(1,2)$, the L\'evy process $L$ has a.s. finite $\delta$-variation if and only if $\Sigma=0$ and $\int_{\left\{\left\|x\right\|\leq 1\right\}}\left\|x\right\|^\delta\nu_{Z_t}(dx)<\infty$.
  \item
   For any $\beta>0$, we have $\bbE[\left\|L_1\right\|^\beta]<\infty$ if and only if $\bbE[\left\|Z_t\right\|^\beta]<\infty$.
  \item
   If $\bbE[\exp\left\{\beta\left\|L_1\right\|\right\}]<\infty$ for some $\beta>0$, then $\bbE[\exp\left\{\eta(t)\left\|Z_t\right\|\right\}]<\infty$ with 
   \[\eta(t)=\frac{\pi}{\sqrt{2}c(t)}\beta\qquad\text{and}\qquad c(t)=\sup\limits_{\mu\in\bbr}\frac{\sqrt{1-\cos(t\mu)}}{\left|\mu\right|}\in(0,\infty).\]
 \een
\label{Proposition 6.19}
\end{Proposition}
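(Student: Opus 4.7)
The plan is to handle (i)–(iv) in order, each time exploiting the explicit representation of the L\'evy measure of $Z_t$ obtained from \cite[Theorem 2.7]{Rajputetal1989}: writing $f_t(\mu):=(1-e^{-it\mu})/(2\pi i\mu)$, one has
$$\nu_{Z_t}(B)=\int_\bbr\int_{\bbr^d}\mathds{1}_B\bigl(f_t(\mu)x\bigr)\,\nu(dx)\,d\mu,\quad B\in\scrb(\bbc^d_*).$$
All estimates rest on the elementary pointwise identity $|f_t(\mu)|=|\sin(t\mu/2)|/(\pi|\mu|)$ and the two induced bounds $|f_t(\mu)|\leq\min\{t/(2\pi),\,1/(\pi|\mu|)\}$ corresponding to the small- and large-$|\mu|$ regimes meeting at $|\mu|=2/t$, together with the sharp bound $|f_t(\mu)|\leq c(t)/(\sqrt{2}\pi)$ with $c(t)=t/\sqrt{2}$.

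For (i), Fubini yields
$$\int_{\|x\|\leq 1}\|x\|\,\nu_{Z_t}(dx)=\int_{\bbr^d}\|y\|\,\biggl(\int_{\{|f_t(\mu)|\|y\|\leq 1\}}|f_t(\mu)|\,d\mu\biggr)\nu(dy),$$
and for each $y\neq 0$ the inner set of integration contains $\{|\mu|\geq\|y\|/\pi\}$, on which $|f_t(\mu)|=|\sin(t\mu/2)|/(\pi|\mu|)$ fails to be integrable; hence the inner integral is $+\infty$ and the hypothesis $\nu\not\equiv 0$ forces the conclusion. For (ii), since $L$ is square integrable we have $\int_{\|y\|>1}\|y\|^\delta\,\nu(dy)<\infty$ for every $\delta<2$, and therefore the upper and lower estimates of Proposition \ref{Proposition 6.13}(iii)--(iv) transfer verbatim with $\alpha=2$, yielding the equivalence $\int_{\|x\|\leq 1}\|x\|^\delta\nu(dx)<\infty\iff\int_{\|x\|\leq 1}\|x\|^\delta\nu_{Z_t}(dx)<\infty$; combining with \cite[Theorem IIIb]{Bretagnolle1972} then settles (ii).

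The nontrivial step is (iii). For $\beta\in(0,2]$ both sides hold automatically by square integrability of $L$ and hence of $Z_t$. For $\beta>2$ I would apply \cite[Theorem 25.3]{Sato1999} to reduce the claim to finiteness of
$$I:=\int_{\bbr^d}\|x\|^\beta\int_\bbr|f_t(\mu)|^\beta\mathds{1}_{\{|f_t(\mu)|\|x\|>1\}}\,d\mu\,\nu(dx).$$
For ``$\Rightarrow$'', I split the inner $\mu$-integral at $|\mu|=2/t$, use $|f_t|\leq t/(2\pi)$ on the first piece and $|f_t|\leq 1/(\pi|\mu|)$ combined with the support restriction $|\mu|<\|x\|/\pi$ on the second; integrability at infinity requires $\beta>1$, which is guaranteed in this regime. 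The inner integral is then bounded uniformly in $\|x\|$ by a finite constant $C_{\beta,t}$, and since the indicator forces $\|x\|>2\pi/t$ one gets $I\leq C_{\beta,t}\bigl(\int_{\|x\|>1}\|x\|^\beta\nu(dx)+\nu(\{1\geq\|x\|>2\pi/t\})\bigr)<\infty$. For ``$\Leftarrow$'', continuity of $|f_t|$ at the removable singularity, with $|f_t(0)|=t/(2\pi)>0$, provides constants $\eps_0,c_0>0$ such that $|f_t|\geq c_0$ on $[-\eps_0,\eps_0]$; for $\|x\|\geq 1/c_0$ the inner integral is then at least $2\eps_0 c_0^\beta$, producing $I\geq 2\eps_0 c_0^\beta\int_{\|x\|\geq 1/c_0}\|x\|^\beta\nu(dx)$, whence $\bbE[\|L_1\|^\beta]<\infty$.

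Finally, (iv) is a calibrated variant of the same circle of ideas: the choice $\eta(t)=\pi\beta/(\sqrt{2}c(t))$ is precisely what forces $\eta(t)|f_t(\mu)|\leq\beta/2$ pointwise. By \cite[Theorem 25.17]{Sato1999} it suffices to show $\int_{\|y\|>1}e^{\eta(t)\|y\|}\nu_{Z_t}(dy)<\infty$; applying the image-measure formula, replacing the exponent by $\beta\|x\|/2$, using Fubini, and estimating $|\{\mu:|f_t(\mu)|\|x\|>1\}|\leq 2\|x\|/\pi$ via $|f_t|\leq 1/(\pi|\mu|)$, I reduce to
$$\int_{\|y\|>1}e^{\eta(t)\|y\|}\nu_{Z_t}(dy)\leq\frac{2}{\pi}\int_{\|x\|>2\pi/t}\|x\|\,e^{\beta\|x\|/2}\,\nu(dx),$$
which is finite because $\|x\|e^{\beta\|x\|/2}$ is dominated by $e^{\beta\|x\|}$ for $\|x\|$ large and $\bbE[\exp(\beta\|L_1\|)]<\infty$ means $\int_{\|x\|>1}e^{\beta\|x\|}\nu(dx)<\infty$ by \cite[Theorem 25.17]{Sato1999}. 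The main obstacle I anticipate is the uniform-in-$\|x\|$ bound on the inner $\mu$-integral in (iii): it is sensitive to the condition $\beta>1$ and requires the two-regime treatment of $|f_t|$, whereas the other parts fit standard Sato-criterion templates once the L\'evy-measure formula is in hand.
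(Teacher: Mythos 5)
Your proposal is correct and works within essentially the same framework as the paper: the image-measure formula for $\nu_{Z_t}$, Fubini, the elementary identity $|f_t(\mu)|=\sqrt{1-\cos(t\mu)}/(\sqrt{2}\pi|\mu|)$ with its small- and large-$|\mu|$ bounds, and Sato's $g$-moment criterion; in particular your part (iv) reproduces the paper's computation almost verbatim, the only cosmetic difference being that you dominate $\|x\|e^{\beta\|x\|/2}$ pointwise by a multiple of $e^{\beta\|x\|}$ where the paper instead applies Cauchy--Schwarz to get $\bbE\big[\|L_1\|e^{\beta\|L_1\|/2}\big]<\infty$. The one place you genuinely add something is (iii): the paper disposes of (i)--(iii) with the remark that they follow ``analogously to Proposition \ref{Proposition 6.13}'', but the moment statement there was deduced from regular variation of $(Z_t)$ via the transfer theorem of \cite{Moseretal2010}, which is unavailable under the mere hypotheses of square-integrability and $\nu\not\equiv 0$; your direct two-regime estimate of the inner $\mu$-integral (uniform upper bound for $\beta>1$ using the indicator constraint $|\mu|<\|x\|/\pi$, and the lower bound from $|f_t|\geq c_0$ near the removable singularity at $\mu=0$) is precisely the argument the paper leaves implicit, and it is carried out correctly.
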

\begin{proof}[\textbf{Proof.}]
 (i), (ii) and (iii) can be shown analogously to the proof of Proposition \ref{Proposition 6.13}. As to (iv), we know by virtue of \cite[Corollary 25.8]{Sato1999} that $\bbE[\exp{\left\{\eta\left\|Z_t\right\|\right\}}]<\infty$ 
 if and only if $\int_{\left\{\left\|x\right\|>1\right\}}\exp{\left\{\eta\left\|x\right\|\right\}}\,\nu_{Z_t}(dx)$ is finite. As in (\ref{Equation 2.4}), we have
 \begin{align}
  \int_{\left\{\left\|x\right\|>1\right\}}&\exp{\left\{\eta\left\|x\right\|\right\}}\,\nu_{Z_t}(dx) \nonumber \\
   &=\int_{\bbr^d\backslash\left\{0\right\}}\int_{\left\{\mu\in\bbr:\ \frac{\sqrt{1-\cos{(t\mu)}}}{\left|\mu\right|}\left\|y\right\|>\sqrt{2}\pi\right\}}
    \exp{\left\{\eta\frac{\left\|y\right\|}{\sqrt{2}\pi}\frac{\sqrt{1-\cos{(t\mu)}}}{\left|\mu\right|}\right\}}\,d\mu\,\nu(dy). \label{Equation 2.6}
 \end{align}
 Setting $c(t):=\sup_{\mu\in\bbr}\frac{\sqrt{1-\cos{(t\mu)}}}{\left|\mu\right|}$, observe first that $c(t)\in(0,\infty)$ for any $t>0$. Thus we obtain for any 
 $y\in\bbr^d\backslash\left\{0\right\}$ with $\left\|y\right\|\leq\frac{\sqrt{2}\pi}{c(t)}$ that $\frac{\sqrt{1-\cos{(t\mu)}}}{\left|\mu\right|}\left\|y\right\|\leq\sqrt{2}\pi$ for all $\mu\in\bbr$. This gives that 
 the inner integral in (\ref{Equation 2.6}) vanishes for all $y\in\bbr^d\backslash\left\{0\right\}$ with $\left\|y\right\|\leq\frac{\sqrt{2}\pi}{c(t)}$ and hence, using in addition the relation 
 $\Big\{\mu\in\bbr:\ \frac{\sqrt{1-\cos{(\mu t)}}}{\left|\mu\right|}\left\|y\right\|>\sqrt{2}\pi\Big\}\subseteq\left\{\mu\in\bbr:\ \left|\mu\right|<\frac{\left\|y\right\|}{\pi}\right\}$,
 (\ref{Equation 2.6}) can be bounded by
 \begin{align}
  \int_{\left\{\left\|x\right\|>1\right\}}\exp{\left\{\eta\left\|x\right\|\right\}}\,\nu_{Z_t}(dx)
  &\leq\int_{\left\{\left\|y\right\|>\frac{\sqrt{2}\pi}{c(t)}\right\}}\int_{\left\{\left|\mu\right|<\frac{\left\|y\right\|}{\pi}\right\}}
   \exp{\left\{\eta\frac{\left\|y\right\|}{\sqrt{2}\pi}\frac{\sqrt{1-\cos{(t\mu)}}}{\left|\mu\right|}\right\}}\,d\mu\,\nu(dy) \nonumber \\
  &\leq\frac{2}{\pi}\cdot\int_{\left\{\left\|y\right\|>\frac{\sqrt{2}\pi}{c(t)}\right\}}\left\|y\right\|\cdot\exp{\left\{\eta\frac{c(t)\cdot\left\|y\right\|}{\sqrt{2}\pi}\right\}}\,\nu(dy). \label{Equation 2.7}
 \end{align}
 Since 
 $\bbE\big[\left\|L_1\right\|\cdot\exp{\big\{\frac{\beta}{2}\left\|L_1\right\|\big\}}\big]\leq\bbE[\left\|L_1\right\|^2]^{\frac{1}{2}}\cdot\bbE\left[\exp{\left\{\beta\left\|L_1\right\|\right\}}\right]^{\frac{1}{2}}<\infty$
 by assumption, \cite[Corollary 25.8]{Sato1999} shows that the right-hand side of (\ref{Equation 2.7}) is finite for $\eta=\eta(t):=\frac{\pi}{\sqrt{2}c(t)}\beta>0$.
\end{proof}
So the random content inherits moments exactly and always has a L\'evy measure around zero with infinite activity and $\int_{\left\{\left\|x\right\|\leq 1\right\}}\left\|x\right\|\nu_{Z_t}(dx)=\infty$. 
Moreover, if $L$ has a moment generating function in a neighborhood of zero then so does $Z_t$. 
\section{Moving Averages and Regularly Varying MCARMA Processes}
Let us now turn to multivariate CARMA processes which are driven by regularly varying L\'evy processes. Before turning to their definition we establish one proposition and one lemma. The following result 
gives further insight into the spectral representation of moving averages of regularly varying L\'evy processes.
\begin{Proposition}
 Let $L=(L_t)_{t\in\bbr}$ be a L\'evy process in $\bbr^d$ with generating triplet $(\ga,\Sigma,\nu)$ where $\nu\in RV(\al, l, \kappa_\nu)$ with $\al\in(1,2]$ and suppose $\bbE[L_1]=0$. 
 Let $M$ be the corresponding random content of Theorem \ref{Theorem 6.4} and assume that $h\in L^1(M_d(\bbc))\cap L^\al(M_d(\bbc))$ such that in addition $h$ is bounded and its 
 Fourier transform $\wh{h}$ is of bounded variation on compacts. Define 
 \[G_t:=\mathop{\bbP-\lim}\limits_{\la\to\infty}\int_{-\la}^\la e^{it\mu}\,\wh{h}(\mu)\left(1-\frac{\left|\mu\right|}{\la}\right) M(d\mu), \quad t\in\bbr.\]
 Then, for all $t\in\bbr$,
 \[G_t = \frac{1}{\sqrt{2\pi}}\int_{-\infty}^\infty h(t-\mu)\,L(d\mu).\]
 If in addition $\kappa_\nu\left(h^{-1}(s)(\bbc^d\backslash\left\{0\right\})\cap\bbr^d\right)=0$ does not hold for almost every $s$, then the process $(G_t)_{t\in\bbr}$ is also regularly varying of index $\al$.
\label{Proposition 6.9}
\end{Proposition}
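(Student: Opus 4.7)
The plan is to mirror closely the proof of Theorem \ref{Theorem 6.4}. Set $f_{t,\la}(\mu):=e^{it\mu}\wh{h}(\mu)\Phi_\la(\mu)$ with $\Phi_\la(\mu):=(1-\left|\mu\right|/\la)\mathds{1}_{[-\la,\la]}(\mu)$. The strategy has three ingredients: (a) verify that $f_{t,\la}$ is $M$-integrable in the sense of Step 2 of that proof; (b) compute $\wh{f_{t,\la}}$ explicitly so that identity (\ref{Equation 6.6}) transfers the $M$-integral into an $L$-integral; (c) pass to the limit $\la\to\infty$ via Theorem \ref{Theorem 6.7}. The regular variation statement is then deduced from a general result of \cite{Moseretal2010}.

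For (a), since $h\in L^1(M_d(\bbc))$ the Fourier transform $\wh h$ is continuous and bounded, and by hypothesis $\wh h$ is of bounded variation on compacts. Multiplication by the continuous compactly supported tent function $\Phi_\la$ and the unimodular factor $e^{it\mu}$ yields a continuous function of compact support with finite total variation on $\bbr$. The approximation by $\scre(\bbr)$-simple functions carried out in Step 3 of the proof of Theorem \ref{Theorem 6.4}, whose core is an integration-by-parts estimate of the Fourier transform via total variation, then applies verbatim and produces simple $f_n$ with $\wh{f_n}\to\wh{f_{t,\la}}$ in $L^\delta(M_d(\bbc))$ for every $\delta\in(1,\al)$ together with the required uniform bound.

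For (b), set $u(s):=h(s+t)$; the translation rule of the Fourier transform gives $\wh u(\mu)=e^{it\mu}\wh h(\mu)$ so that $f_{t,\la}=\Phi_\la\wh u$. Applying Fourier inversion and the product/convolution exchange exactly as in Step 4 of the proof of Theorem \ref{Theorem 6.4} yields
\[\wh{f_{t,\la}}\,=\,\wh{\Phi_\la\wh u}\,=\,\wh{\wh{F_\la*u}}\,=\,(F_\la*u)(-\hspace{0.5mm}\cdot\hspace{0.5mm}),\]
and a substitution exploiting evenness of the Fej\'er kernel $F_\la$ rewrites this as $F_\la*h_t$ with $h_t(s):=h(t-s)$. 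Inserting into (\ref{Equation 6.6}) produces
\[\int_{-\infty}^\infty f_{t,\la}\,dM\,=\,\frac{1}{\sqrt{2\pi}}\int_{-\infty}^\infty\wh{f_{t,\la}}\,dL\,=\,\frac{1}{\sqrt{2\pi}}\int_{-\infty}^\infty(F_\la*h_t)\,dL.\]

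For (c), interpolation gives $h\in L^\delta(M_d(\bbc))$ for every $\delta\in[1,\al]$, whence also $h_t\in L^\delta$. Because $F_\la$ is an approximate identity, $F_\la*h_t\to h_t$ in $L^\delta$ for any $\delta\in[1,\al)$, while Young's inequality combined with boundedness of $h$ gives $\left\|F_\la*h_t\right\|_\infty\le\left\|h\right\|_\infty$ uniformly in $\la$. Theorem \ref{Theorem 6.7} (extended to complex matrix-valued integrands as in Step 2 of the proof of Theorem \ref{Theorem 6.4}) then delivers $\int(F_\la*h_t)\,dL\limp\int h(t-s)\,L(ds)$ as $\la\to\infty$, establishing the identity for $G_t$. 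For the regular variation assertion, each joint vector $(G_{t_1}^\prime,\ldots,G_{t_m}^\prime)^\prime$ equals a single stochastic integral $\frac{1}{\sqrt{2\pi}}\int H(s)\,L(ds)$ where $H(s)\in M_{md\times d}(\bbc)$ is the vertical stacking of $h(t_1-s),\ldots,h(t_m-s)$; the non-degeneracy hypothesis on $\kappa_\nu$ is inherited by each block and hence by $H$, so a direct application of \cite[Theorem~3.2]{Moseretal2010} yields regular variation of index $\al$ for every finite-dimensional distribution of $(G_t)_{t\in\bbr}$. I expect the main obstacle in step (b): tracking the translation by $t$, the reflection and the evenness of $F_\la$ so that $\wh{f_{t,\la}}$ rewrites as the clean convolution $F_\la*h_t$ requires careful bookkeeping; step (a) is delicate only at the point where the hypothesis that $\wh h$ is of bounded variation on compacts becomes indispensable.
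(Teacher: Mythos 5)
Your proposal is correct and follows essentially the same route as the paper's proof: approximate $e^{it\mu}\wh{h}(\mu)\Phi_\la(\mu)$ by simple functions as in Step 3 of Theorem \ref{Theorem 6.4}, use (\ref{Equation 6.6}) and the Fej\'er-kernel identities to rewrite $\wh{f_{t,\la}}$ as $F_\la*h_t$, and pass to the limit via Theorem \ref{Theorem 1.36} and Theorem \ref{Theorem 6.7}. The only cosmetic difference is that you invoke \cite[Theorem 3.2]{Moseretal2010} with an explicit stacking argument for the finite-dimensional distributions where the paper cites \cite[Corollary 3.5]{Moseretal2010}, which packages exactly that argument.
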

\begin{proof}[\textbf{Proof.}]
 Since $h\in L^1(M_d(\bbc))$, the Fourier transform $\wh{h}$ is obviously continuous and thus the function 
 \[f_{\la,t}(\mu):=e^{it\mu}\,\wh{h}(\mu)\left(1-\frac{\left|\mu\right|}{\la}\right)\mathds{1}_{[-\la,\la]}(\mu),\quad\mu\in\bbr,\]
 is continuous with compact support on $\bbr$ and has bounded variation by assumption. Consequently, it can be approximated in the $L^1(M_d(\bbc))$-norm by a sequence of $\scre(\bbr)$-simple functions $f_n$
 satisfying in addition $\wh{f_n}\to\wh{f_{\la,t}}$ in $L^\delta(M_d(\bbc))$ as $\nto$ for any $\delta\in(1,\al)$ (this can be shown in the same way as in Step 3 of the proof of Theorem \ref{Theorem 6.4}, see also
 Remark \ref{Remark 4.11}). Thus $f_{\la,t}$ is $M$-integrable for any $\la>0$ and $t\in\bbr$. 
 
 Then
 \[G_t=\mathop{\bbP-\lim}\limits_{\la\to\infty}\int_{-\infty}^\infty f_{\la,t}(\mu)\,M(d\mu)\stackrel{(\scriptsize\ref{Equation 6.6})}{=}\mathop{\bbP-\lim}\limits_{\la\to\infty}\frac{1}
  {\sqrt{2\pi}}\int_{-\infty}^\infty\wh{f_{\la,t}}(\mu)\,L(d\mu).\]
 Setting $\Phi_\la(\mu):=\big(1-\frac{\left|\mu\right|}{\la}\big)\mathds{1}_{[-\la,\la]}(\mu)$ and $h_t(\mu):= h(\mu + t)$, we have 
 \[f_{\la,t} = \Phi_\la e^{it\hspace{0.5mm}\cdot\hspace{0.5mm}} \wh{h} =\Phi_\la\wh{h_t}\overset{(\scriptsize\ref{Equation 1.12})}{\underset{(\scriptsize\ref{Equation 1.19})}{=}}\wh{F_\la*h_t}\]
 and thus, due to the ``inversion formula'' (see \cite[p. 158]{Katznelson2004}) and Theorem \ref{Theorem 1.36},
 \[\wh{f_{\la,t}}=\wh{\wh{F_\la * h_t}}=(F_\la * h_t)(-\hspace{0.5mm}\cdot\hspace{0.5mm}) = F_\la * (h_t(-\hspace{0.5mm}\cdot\hspace{0.5mm}))
  \stackrel{L^\delta(M_d(\bbc))}{\to}h_t(-\hspace{0.5mm}\cdot\hspace{0.5mm})\quad\text{as }\la\to\infty\]
 for any $1\leq\delta<\al$ since $h_t\in L^1(M_d(\bbc))\cap L^\al(M_d(\bbc))$. Hence Theorem \ref{Theorem 6.7} yields
 \[G_t=\mathop{\bbP-\lim}\limits_{\la\to\infty}\frac{1}{\sqrt{2\pi}}\int_{-\infty}^\infty F_\la*(h_t(-\hspace{0.5mm}\cdot\hspace{0.5mm}))\,dL = \frac{1}{\sqrt{2\pi}}\int_{-\infty}^\infty 
  h_t(-\hspace{0.5mm}\cdot\hspace{0.5mm})\,dL\]
 for all $t\in\bbr$. 
 
 The additional statement follows from \cite[Corollary 3.5]{Moseretal2010}.
\end{proof}
The next lemma verifies the assumptions of Proposition \ref{Proposition 6.9} for regularly varying MCARMA processes.
\begin{Lemma} 
 Let $p,\,q\in\bbn_0$ with $p>q$ and $A_1,\ldots,\,A_p,\,B_0,\,B_1,\ldots,\,B_q\in M_d(\bbc)$ with $B_0\neq 0$. 
 Define
 \[P:\bbc\to M_d(\bbc),\,z\mapsto z^p\mathrm{I}_d + z^{p-1}A_1+\ldots+A_p,\]
 \[Q:\bbc\to M_d(\bbc),\,z\mapsto z^qB_0 + z^{q-1}B_1+\ldots+B_q\]
 and assume that $\scrn(P):=\left\{z\in\bbc:\,\mathrm{det}(P(z))=0\right\}\subseteq\bbr\backslash\left\{0\right\} + i\bbr$. Then the function 
 \[g:\bbr\to M_d(\bbc),\ g(\mu):=P(i\mu)^{-1}Q(i\mu)\]
 is continuous and of bounded variation on compacts. Moreover $g = \wh{h}$ in $L^2(M_d(\bbc))$ where, for all $\mu\neq 0$,
 \begin{align}
  h(\mu)
   &:=\frac{1}{\sqrt{2\pi}}\int_\bbr e^{i\mu s}P(is)^{-1}Q(is)\,ds = \sqrt{2\pi}\sum\limits_\la\sum\limits_{s=0}^{m(\la)-1}\left(\mu^se^{\la\mu}
    \mathds{1}_{\left\{\mathrm{Re}(\la)\cdot\mu<0\right\}}C_{\la s}\right)\in L^\delta(M_d(\bbc))\label{Equation 4.2}
 \end{align}
 for any $\delta\geq 1$. Here $\sum\limits_\la$ denotes the sum over all distinct zeros in $\scrn(P)$, the multiplicity of the zero $\la$ is written as $m(\la)$ and $C_{\la s}$ are constant 
 complex-valued $d\times d$ matrices.
\label{Lemma 4.5}
\end{Lemma}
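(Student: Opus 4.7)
The plan is to handle the three assertions in order: regularity of $g$, the explicit formula for $h$, and the $L^\delta$ membership together with $g=\wh{h}$ in $L^2$.

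First I would argue that $g$ is continuous and of bounded variation on compacts. Since $\scrn(P)\subseteq(\bbr\backslash\{0\})+i\bbr$, no purely imaginary number lies in $\scrn(P)$; hence $\det(P(i\mu))\neq 0$ for every $\mu\in\bbr$, so $\mu\mapsto P(i\mu)^{-1}$ is a matrix-valued rational function with no real pole. Consequently $g$ is $C^\infty$ on $\bbr$, in particular continuous and of bounded variation on each compact interval. Moreover, since $P(z)/z^p\to\mathrm{I}_d$ as $|z|\to\infty$, standard cofactor estimates yield $\|g(\mu)\|=O(|\mu|^{q-p})=O(|\mu|^{-1})$ as $|\mu|\to\infty$, so $g\in L^2(M_d(\bbc))$.

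Next I would compute $h$ via a matrix-valued partial fraction decomposition combined with contour integration. Writing $P(is)^{-1}=\mathrm{adj}(P(is))/\det(P(is))$, each entry of $P(is)^{-1}Q(is)$ is a scalar rational function in $s\in\bbc$ whose poles are precisely the points $s=-i\la$ with $\la\in\scrn(P)$, of order at most $m(\la)$. A matrix-valued partial fraction decomposition therefore yields
\[P(is)^{-1}Q(is)=\sum_\la\sum_{k=0}^{m(\la)-1}\frac{D_{\la k}}{(is-\la)^{k+1}}\]
for certain $D_{\la k}\in M_d(\bbc)$. For fixed $\mu\neq 0$ I would evaluate each integral $\frac{1}{\sqrt{2\pi}}\int_\bbr e^{i\mu s}(is-\la)^{-(k+1)}\,ds$ by closing the contour in the upper half-plane when $\mu>0$ and in the lower half-plane when $\mu<0$; Jordan's lemma together with the $O(|s|^{-1})$ decay of the integrand along large semicircles justifies vanishing of the arc contribution. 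The pole $s=-i\la$ has imaginary part $-\mathrm{Re}(\la)$, so it lies inside the chosen contour exactly when $\mathrm{Re}(\la)\cdot\mu<0$, producing the indicator in (\ref{Equation 4.2}). Computing the order-$(k+1)$ residue reduces to differentiating $e^{i\mu s}$ up to $k$ times at $s=-i\la$, which gives terms of the form $\mu^s e^{\la\mu}$ for $s=0,\ldots,k$. Collecting contributions across all $k<m(\la)$ and absorbing the numerical factors into new constants $C_{\la s}\in M_d(\bbc)$ produces (\ref{Equation 4.2}).

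From the closed form the $L^\delta$ claim is immediate: the indicator enforces $\mathrm{Re}(\la)\mu<0$ on the support of each summand, so $|e^{\la\mu}|=e^{\mathrm{Re}(\la)\mu}$ decays exponentially as $|\mu|\to\infty$, dominating every polynomial factor $|\mu|^s$. Hence each summand, and therefore $h$ itself, belongs to $L^\delta(M_d(\bbc))$ for every $\delta\geq 1$, and in particular $h\in L^1\cap L^2$. Since $h$ has been constructed as the inverse Fourier transform of $g\in L^2$, Plancherel's $L^2$-inversion theorem finally yields $\wh{h}=g$ in $L^2(M_d(\bbc))$.

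The main technical obstacle I anticipate is the bookkeeping of the higher-order residue computation for poles of multiplicity $m(\la)>1$: one must verify that differentiating the integrand the prescribed number of times produces exactly the monomial factors $\mu^s$ with $s=0,\ldots,m(\la)-1$ and that no terms are lost when the partial-fraction constants $D_{\la k}$ are recombined into the constants $C_{\la s}$. Since the statement asserts only the existence of the $C_{\la s}$ without identifying them explicitly, this reduces to checking the index range and the maximal polynomial degree, both of which follow directly from the order of the pole.
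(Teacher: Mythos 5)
Your argument is correct and follows essentially the same route as the paper's proof: the paper likewise deduces continuity and bounded variation of $g$ from the absence of purely imaginary zeros of $\det P$, and obtains the closed form for $h$ by applying the residue theorem componentwise to $e^{i\mu s}S_{jk}(is)/\det(P(is))$, with the choice of half-plane producing the indicator $\mathds{1}_{\{\mathrm{Re}(\la)\cdot\mu<0\}}$ and the higher-order residue formula producing the monomials $\mu^s$, $s=0,\ldots,m(\la)-1$. The only cosmetic difference is that you insert a partial-fraction decomposition before integrating term by term, whereas the paper computes the residues of the full rational function directly.
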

\begin{proof}[\textbf{Proof.}]
 We need the following consequence of the residue theorem from complex analysis (see for instance \cite[Section VI.2, Theorem 2.2]{Lang1993} or \cite[Section III.7, Theorem 7.11]{Freitagetal2000}):
 let $p,\,q:\bbc\to\bbc$ be polynomials where $p$ is of higher degree than $q$. Assume that $p$ has no zeros on the real line. Then 
 \[\int_{-\infty}^\infty e^{i\mu t}\frac{q(t)}{p(t)}\,dt = \left\{\begin{array}{l}2\pi i\cdot\sum\limits_{\substack{z\in\bbc:\,\mathrm{Im}(z)>0,\\ p(z)=0}}\mathrm{Res}_zf,\quad\hfill\mu>0 \\
  -2\pi i\cdot\sum\limits_{\substack{z\in\bbc:\,\mathrm{Im}(z)<0,\\ p(z)=0}}\mathrm{Res}_zf,\quad\hfill\mu<0\end{array}\right\}\]
 with $f:\bbc\to\bbc,\,z\mapsto e^{i\mu z}\frac{q(z)}{p(z)}$ and $\mathrm{Res}_zf$ denoting the residual of $f$ at point $z$. 
 
 Turning now to our function $g$, note first that it is well-defined by virtue of \cite[Lemma 3.10]{Marquardtetal2007}. It is clearly continuous and we have from elementary matrix theory that
 \[g(\mu)=P(i\mu)^{-1}Q(i\mu)=\frac{S(i\mu)}{\mathrm{det}(P(i\mu))}\]
 where $S:\bbc\to M_d(\bbc)$ is some matrix-valued polynomial. Using \cite[Lemma 3.11]{Marquardtetal2007} it is easy to see that the complex-valued polynomial $\mathrm{det}(P(i\mu))$ in $\mu$ is of 
 higher degree than $S(i\mu)$. Since all zeros of $P$ are assumed to have non-vanishing real part, the zeros of $P(i\hspace{0.5mm}\cdot\hspace{0.5mm})$ have non-vanishing imaginary part. On the 
 one hand this implies that all components of the function $g$ are continuously differentiable and hence $g$ is of bounded variation on compacts. 
 
 On the other hand this enables us to apply the above stated results from complex function theory component wise and we deduce for all $j,k=1,\ldots,d$ and $\mu\in\bbr,\,\mu\neq 0$,
 \begin{align}
  \left(\sqrt{2\pi}h(\mu)\right)_{jk}
   &=\left(\int_{-\infty}^\infty e^{i\mu t} g(t)\,dt\right)_{jk} \nonumber \\
   &=2\pi i\bigg(\mathds{1}_{\left\{\mu>0\right\}}\cdot\sum\limits_{\substack{z\in\bbc:\,\mathrm{Im}(z)>0, \\ \mathrm{det}(P(iz))=0}}\mathrm{Res}_zf_{jk}
    - \mathds{1}_{\left\{\mu<0\right\}}\cdot\sum\limits_{\substack{z\in\bbc:\,\mathrm{Im}(z)<0,\\ \mathrm{det}(P(iz))=0}}\mathrm{Res}_zf_{jk}\bigg) \nonumber \\
   &=2\pi i\bigg(\mathds{1}_{\left\{\mu>0\right\}}\cdot\sum\limits_{\substack{z\in\bbc:\,\mathrm{Re}(z)<0, \\ \mathrm{det}(P(z))=0}}\mathrm{Res}_{-iz}f_{jk} 
    - \mathds{1}_{\left\{\mu<0\right\}}\cdot\sum\limits_{\substack{z\in\bbc:\,\mathrm{Re}(z)>0, \\ \mathrm{det}(P(z))=0}}\mathrm{Res}_{-iz}f_{jk}\bigg) \label{Equation 4.3}
 \end{align}
 where $f_{jk}:\bbc\to\bbc,\,z\mapsto e^{i\mu z}\frac{S_{jk}(iz)}{\mathrm{det}(P(iz))}$. 
 
 Let $\la$ denote the distinct zeros of $\mathrm{det}(P(z))$ and $m(\la)$ the multiplicity of the zero $\la$. Since it is well-known that the residual of any meromorphic function $f:\bbc\to\bbc$ at a pole 
 $a$ of order $n\in\bbn$ is given by
 \[\mathrm{Res}_af=\frac{1}{(n-1)!}\left[\frac{d^{n-1}}{dz^{n-1}}(z-a)^nf(z)\right]_{z=a}\]
 (see \cite[Section III.6, Remark 6.4.1]{Freitagetal2000}), the residual of $f_{jk}$ at the point $-i\la$, with $\la$ being any zero of $\mathrm{det}(P(z))$, can be written as
 \begin{align}
  \mathrm{Res}_{-i\la}f_{jk}
   &=\frac{1}{(m(\la)-1)!}\left[\frac{d^{m(\la)-1}}{dz^{m(\la)-1}}(z+i\la)^{m(\la)}e^{i\mu z}\frac{S_{jk}(iz)}{\mathrm{det}(P(iz))}\right]_{z=-i\la}
    = -i\cdot\sum\limits_{s=0}^{m(\la)-1}c_{\la s}^{jk}\mu^se^{\la\mu} \label{Equation 4.4}
 \end{align}
 for appropriate complex constants $c_{\la s}^{jk}$ (where the sum reduces to $-i\cdot S_{jk}(\la)/\left[\frac{d}{dz}\,\mathrm{det}(P(z))\right]_{z=\la}e^{\la\mu}$ if $m(\la)=1$).
 Thus (\ref{Equation 4.3}) becomes
 \begin{align}
  h(\mu)_{jk}
   &=\sqrt{2\pi}\Big(\mathds{1}_{\left\{\mu>0\right\}}\cdot\sum\limits_{\la:\,\mathrm{Re}(\la)<0}\sum\limits_{s=0}^{m(\la)-1}c_{\la s}^{jk}\mu^s e^{\la\mu} 
    - \mathds{1}_{\left\{\mu<0\right\}}\cdot\sum\limits_{\la:\,\mathrm{Re}(\la)>0}\sum\limits_{s=0}^{m(\la)-1}c_{\la s}^{jk}\mu^s e^{\la\mu}\Big) \nonumber \\
   &=\sqrt{2\pi}\sum\limits_{\la}\sum_{s=0}^{m(\la)-1}\left(\wt{c}_{\la s}^{jk}\mu^se^{\la\mu}\mathds{1}_{\left\{\mathrm{Re}(\la)\cdot\mu<0\right\}}\right) \label{Equation 4.5}
 \end{align}
 with $\wt{c}_{\la s}^{jk}:=c_{\la s}^{jk}$ if $\mathrm{Re}(\la)<0$ and $\wt{c}_{\la s}^{jk}:=-c_{\la s}^{jk}$ if $\mathrm{Re}(\la)>0.$ 
 
 Hence $h$ is obviously in $L^\delta(M_d(\bbc))$ for any $\delta\geq 1$ and by virtue of the ``inversion formula'' (cf. \cite[p. 177]{Katznelson2004}) we obtain $\wh{h}=g$ in $L^2(M_d(\bbc))$.
 
 Finally, defining the $d\times d$ matrices $C_{\la s}:=(\wt{c}_{\la s}^{jk})_{jk}$, we obtain the claimed representation of $h$ in (\ref{Equation 4.2}). 
\end{proof}
Let us now give a spectral definition of regularly varying MCARMA processes with index $\al\in(1,2]$. Note that the well-definedness is ensured by Proposition \ref{Proposition 6.9} and Lemma \ref{Lemma 4.5}. 
Our definition is the regularly varying analogon of \cite[Definition 3.18]{Marquardtetal2007}, hence similar arguments to \cite[Remark 3.6 and 3.19]{Marquardtetal2007} show that a regularly varying MCARMA 
process $Y$ can be interpreted as a solution to the formal $p$-th-order $d$-dimensional differential equation 
\[P(D)Y_t = Q(D)DL_t,\quad t\in\bbr,\]
where $D$ denotes the differentiation operator with respect to $t$ and $P$ and $Q$ are the autoregressive and moving average polynomial, respectively. This differential equation is obviously comparable 
to the difference equation characterizing ARMA processes in discrete time.
\begin{Definition}[\textbf{Regularly Varying MCARMA Process}]
 Let $L=(L_t)_{t\in\bbr}$ be a $d$-dimensional L\'evy process with generating triplet $(\ga,\Sigma,\nu)$ where $\nu\in RV(\al, l, \kappa_\nu)$ with index $\al\in(1,2]$ and suppose $\bbE[L_1]=0$. 
 Let $M$ be again the corresponding random content of Theorem \ref{Theorem 6.4}. Then a $d$-dimensional {\rm regularly varying L\'evy-driven continuous time autoregressive moving average process}
 $(Y_t)_{t\in\bbr}$ of order $(p,q)$ with $p,\,q\in\bbn_0,\,p>q$ ({\rm regularly varying MCARMA$(p,q)$ process}) with index $\al$ is defined as the regularly varying process 
 \begin{align}
  Y_t:&=\mathop{\bbP-\lim}\limits_{\la\to\infty}\int_{-\la}^\la e^{it\mu}P(i\mu)^{-1}Q(i\mu)\left(1-\frac{\left|\mu\right|}{\la}\right) M(d\mu),\quad t\in\bbr,\quad\text{where} \nonumber \\
  P(z):&=z^p\mathrm{I}_d + z^{p-1}A_1 + \ldots + A_p\quad\text{and} \nonumber \\
  Q(z):&=z^qB_0+z^{q-1}B_1+\ldots+B_q \nonumber
 \end{align}
 are the autoregressive and moving average polynomial, respectively. \\
 Here $A_i,\,B_j\in M_d(\bbr)$ are real matrices satisfying $B_0\neq 0$ and $\scrn(P)=\left\{z\in\bbc:\,\mathrm{det}(P(z))=0\right\}\subseteq\bbr\backslash\left\{0\right\}+i\bbr$ and $\kappa_\nu$ is a 
 Radon measure such that $\kappa_\nu\left(h^{-1}(s)(\bbc^d\backslash\left\{0\right\})\cap\bbr^d\right)=0$ does not hold for almost every $s$, where $h=\wh{g}(-\hspace{0.5mm}\cdot\hspace{0.5mm})$ with
 $g=P(i\hspace{0.5mm}\cdot\hspace{0.5mm})^{-1}Q(i\hspace{0.5mm}\cdot\hspace{0.5mm})$ (cf. (\ref{Equation 4.2})).
\label{Definition 6.10}
\end{Definition}
\noindent
Note that in the causal case (i.e. $\scrn(P)\subseteq (-\infty,0)+i\bbr$) with $p=q+1$ it is sufficient that $B_0$ is invertible in order to ensure that $\kappa_\nu$ satisfies the preceding condition.

In addition to their spectral representation (in the summability sense), the following moving average representation of regularly varying MCARMA processes is immediately obtained. 
\begin{Corollary}
 Let $Y=(Y_t)_{t\in\bbr}$ be a regularly varying MCARMA$(p,q)$ process of index $\al\in(1,2]$. Then $Y$ has the moving average representation 
 \[Y_t=\frac{1}{\sqrt{2\pi}}\int_{-\infty}^\infty h(t-\mu)\,L(d\mu)\]
 for all $t\in\bbr$, where the kernel function (cf. (\ref{Equation 4.2})) is given by $h=\frac{1}{\sqrt{2\pi}}\int_{-\infty}^\infty e^{is\hspace{0.5mm}\cdot\hspace{0.5mm}}P(is)^{-1}Q(is)\,ds$.
\label{Corollary 6.11}
\end{Corollary}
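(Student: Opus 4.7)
The plan is to invoke Proposition \ref{Proposition 6.9} directly, taking as kernel the function $h$ constructed in Lemma \ref{Lemma 4.5}. First I would set $g(\mu):=P(i\mu)^{-1}Q(i\mu)$ and observe, using the hypothesis $\scrn(P)\subseteq\bbr\backslash\left\{0\right\}+i\bbr$ (which forces the zeros of $P(i\hspace{0.5mm}\cdot\hspace{0.5mm})$ to lie off the real axis), that $g$ is a smooth rational matrix function on $\bbr$ without poles, hence continuous and of bounded variation on every compact. Lemma \ref{Lemma 4.5} then supplies the explicit function $h$ in (\ref{Equation 4.2}) together with the identity $\wh{h}=g$ in $L^2(M_d(\bbc))$; since both $g$ and $\wh{h}$ are continuous (the former by the previous remark, the latter because $h\in L^1$, as verified below), this identity in fact holds pointwise everywhere.

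Next I would verify the remaining analytic hypotheses of Proposition \ref{Proposition 6.9} for this specific $h$. From the structure of $h$ as a finite sum of terms of the form $\mu^s e^{\la\mu}\mathds{1}_{\left\{\mathrm{Re}(\la)\mu<0\right\}}$ with $\mathrm{Re}(\la)\neq 0$, each supported on a half-line on which $|e^{\la\mu}|=e^{\mathrm{Re}(\la)\mu}$ decays exponentially, it is immediate that $h$ is bounded and that $h\in L^\delta(M_d(\bbc))$ for every $\delta\geq 1$; in particular $h\in L^1(M_d(\bbc))\cap L^\al(M_d(\bbc))$. Bounded variation of $\wh{h}=g$ on compacts has already been noted.

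With all hypotheses of Proposition \ref{Proposition 6.9} in place, I would read off the moving average representation by matching the limit in probability in Definition \ref{Definition 6.10} with the one appearing in the statement of Proposition \ref{Proposition 6.9}, which yields $Y_t=\frac{1}{\sqrt{2\pi}}\int_{-\infty}^\infty h(t-\mu)\,L(d\mu)$ for every $t\in\bbr$. No real obstacle is anticipated: Lemma \ref{Lemma 4.5} (the residue computation producing the exponential-polynomial kernel together with Fourier inversion) and Proposition \ref{Proposition 6.9} (the spectral-to-time-domain identity via the Fej\'er kernel) have already absorbed the substantive analytic content, so the corollary reduces to a routine bookkeeping application of these two results.
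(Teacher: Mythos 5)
Your proposal is correct and follows exactly the paper's own route: the paper's proof of Corollary \ref{Corollary 6.11} is literally ``Combine Proposition \ref{Proposition 6.9} and Lemma \ref{Lemma 4.5}'', and your verification that the kernel $h$ from (\ref{Equation 4.2}) satisfies the hypotheses of Proposition \ref{Proposition 6.9} (boundedness, $h\in L^1\cap L^\al$, bounded variation of $\wh{h}=g$ on compacts) is precisely the content already established in Lemma \ref{Lemma 4.5}. You simply spell out the bookkeeping that the paper leaves implicit.
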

\begin{proof}[\textbf{Proof.}]
 Combine Proposition \ref{Proposition 6.9} and Lemma \ref{Lemma 4.5}.
\end{proof}
Using this kernel representation, strict stationarity of regularly varying MCARMA processes is obtained by applying \cite[Theorem 4.3.16]{Applebaum2009}.
\begin{Proposition}
 The regularly varying MCARMA process is strictly stationary.
\label{Proposition 6.17}
\end{Proposition}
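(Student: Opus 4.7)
The plan is to deduce strict stationarity directly from the moving average representation of Corollary \ref{Corollary 6.11} via the stationarity of the increments of the driving Lévy process, precisely as formalized in \cite[Theorem 4.3.16]{Applebaum2009}.

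First I would recall from Corollary \ref{Corollary 6.11} that the regularly varying MCARMA process admits the representation
\[
 Y_t = \frac{1}{\sqrt{2\pi}}\int_{-\infty}^\infty h(t-\mu)\,L(d\mu), \quad t\in\bbr,
\]
with kernel $h$ as given in (\ref{Equation 4.2}). To invoke \cite[Theorem 4.3.16]{Applebaum2009} I need to verify that the integrand $\mu\mapsto h(t-\mu)$ is $L$-integrable for every $t\in\bbr$, uniformly enough to apply the cited stationarity result. This is immediate from Lemma \ref{Lemma 4.5}: the kernel $h$ belongs to $L^\delta(M_d(\bbc))$ for every $\delta\geq 1$, is bounded, and the translated kernels $h(t-\cdot)$ inherit these properties, so $L$-integrability in the sense of \cite{Rajputetal1989} follows from Theorem \ref{Theorem 6.5}.

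Next, for arbitrary $n\in\bbn$, $t_1,\ldots,t_n\in\bbr$ and a shift $\tau\in\bbr$, I would write the finite-dimensional marginal $(Y_{t_1+\tau},\ldots,Y_{t_n+\tau})$ as a single stochastic integral against $L$ with matrix-valued kernel $\mu\mapsto \bigl(h(t_j+\tau-\mu)\bigr)_{j=1,\ldots,n}$. Performing the substitution $\mu = \nu+\tau$ transforms this into an integral of $(h(t_j-\nu))_{j}$ with respect to the shifted Lévy process $\widetilde L_\cdot = L_{\cdot+\tau}-L_\tau$. Since $L$ has stationary and independent increments, $\widetilde L\eqd L$ as processes, hence the stochastic integrals have the same joint distribution, yielding $(Y_{t_1+\tau},\ldots,Y_{t_n+\tau})\eqd (Y_{t_1},\ldots,Y_{t_n})$.

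The only subtlety — and the step I would expect to require the most care — is checking that the change-of-variable / shift argument respects the stochastic integral (i.e. that it is not a purely formal computation). This is exactly the content of \cite[Theorem 4.3.16]{Applebaum2009}, which asserts that a moving average process of the form $X_t = \int_\bbr f(t-s)\,dL_s$ with deterministic kernel $f$ and $L$-integrable translates is strictly stationary. Applying that result to our kernel $h$ (whose translates are $L$-integrable by Lemma \ref{Lemma 4.5} and Theorem \ref{Theorem 6.5}) gives strict stationarity of $Y$ and completes the proof.
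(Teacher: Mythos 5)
Your proposal is correct and follows essentially the same route as the paper, which likewise derives strict stationarity from the moving average representation of Corollary \ref{Corollary 6.11} by invoking \cite[Theorem 4.3.16]{Applebaum2009}. The extra details you supply (integrability of the translated kernels via Lemma \ref{Lemma 4.5} and Theorem \ref{Theorem 6.5}, and the shift argument using stationary increments of $L$) are exactly the verifications implicit in the paper's one-line appeal to that theorem.
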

\begin{Remark}$~~$
 \begin{enumerate}
  \item
   One might think that regularly varying MCARMA processes also have a bona fide spectral representation of the form
   \[Y_t=\int_{-\infty}^\infty e^{it\mu}P(i\mu)^{-1}Q(i\mu)\,\wt{M}(d\mu),\quad t\in\bbr,\] 
   for an appropriate extension $\wt{M}$ of the regularly varying random content of Theorem \ref{Theorem 6.4} to $\scrb_0(\bbr)$, the collection of all Borel sets with finite Lebesgue measure. 
   In the case of driving L\'evy processes that are symmetric $\al$-stable ($S\al S$ for short) with index of stability $\al\in(1,2)$, the relationship between harmonizable $S\al S$ processes (i.e. Fourier 
   transforms of possibly dependently scattered $S\al S$ noises) and moving averages of stationarily and independently scattered $S\al S$ noises has been studied for a long time. Finally, it has been shown 
   in \cite[Proposition 1.9]{Makagonetal1990} that if the $S\al S$ MCARMA process with $\al\in(1,2)$ had such a bona fide spectral representation, it would be equal to $0$ for all $t\in\bbr$. 
   Thus such a representation cannot exist in general.
  \item
   However, for $\al=2$ we can distinguish the following two cases: if $\bbE[\left\|L_1\right\|^2]<\infty$, then we are in the setting of \cite{Marquardtetal2007} and one can derive a bona fide spectral 
   representation for the driving L\'evy and the associated MCARMA process. If $L_1$ has infinite variance, then the $L^2$-theory is not applicable but we get a spectral representation (in the summability sense) 
   for the driving L\'evy and the associated MCARMA process according to Theorem \ref{Theorem 6.4} and Definition \ref{Definition 6.10}, respectively.
 \end{enumerate}
\label{Remark 4.12}
\end{Remark}
\section{Consistency to Previously Defined Causal MCARMA Processes}
The established spectral representation clearly extends the L\'evy-case with finite second moments. However, also in \cite{Marquardtetal2007} an extension of MCARMA processes in the causal case, 
the so-called causal MCARMA process, has been introduced. 
In this section we are going to prove that both definitions coincide when they apply both.
\begin{Proposition}
 Let the polynomials $P$ and $Q$ be defined as in Lemma \ref{Lemma 4.5}. Moreover we define the matrices
 \[A:=\begin{pmatrix}
  0      & \mathrm{I}_d & 0            & \ldots & 0            \\
  0      & 0            & \mathrm{I}_d & \ddots & \vdots       \\
  \vdots & \vdots       & \ddots       & \ddots & 0            \\
  0       & 0            & \ldots       & 0      & \mathrm{I}_d \\
  -A_p   & -A_{p-1}     & \ldots       & \ldots & -A_1 \end{pmatrix}
 \in M_{dp}(\bbc)\]
 and $\beta=(\beta_1^*,\ldots,\,\beta_p^*)^*\in M_{dp\times d}(\bbc)$ with
 \beqq
  \beta_{p-j}:=-\sum\limits_{i=1}^{p-j-1} A_i\beta_{p-j-i} + B_{q-j},\quad j=0,1,\ldots,p-1,
 \label{Equation 2.8}
 \eeqq
 setting $B_i=0$ for $i<0$. Then, for any $t\in\bbr$, 
 \beqq
  \left(\mathrm{I}_d,0_{M_d(\bbc)},\ldots,0_{M_d(\bbc)}\right)e^{tA}\beta=\frac{1}{2\pi i}\int_\rho e^{tz}P(z)^{-1}Q(z)\, dz
 \label{Equation 4.7}
 \eeqq
 where $\rho$ is a simple closed curve in the complex plane that encircles all eigenvalues of the matrix $A$. 
\label{Proposition 4.9}
\end{Proposition}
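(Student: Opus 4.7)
The plan is to recognize (\ref{Equation 4.7}) as an instance of the holomorphic functional calculus applied to the matrix $A$, and then reduce the claim to an algebraic identity relating the resolvent $(zI_{dp} - A)^{-1}$ to the rational matrix $P(z)^{-1}Q(z)$.

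First I would observe that the eigenvalues of the companion matrix $A$ are precisely the elements of $\scrn(P)$. Consequently, $\rho$ encloses the spectrum of $A$ and by the Dunford--Riesz (holomorphic) functional calculus one has
\begin{equation*}
 e^{tA}=\frac{1}{2\pi i}\int_\rho e^{tz}(zI_{dp}-A)^{-1}\,dz.
\end{equation*}
Multiplying by $\left(\mathrm{I}_d,0_{M_d(\bbc)},\ldots,0_{M_d(\bbc)}\right)$ on the left and $\beta$ on the right and pulling these (constant) factors inside the contour integral, the proposition reduces to proving the purely algebraic identity
\begin{equation*}
 \left(\mathrm{I}_d,0_{M_d(\bbc)},\ldots,0_{M_d(\bbc)}\right)(zI_{dp}-A)^{-1}\beta = P(z)^{-1}Q(z)
\end{equation*}
for every $z\notin\scrn(P)$.

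To establish this identity I would fix such a $z$, set $u:=(zI_{dp}-A)^{-1}\beta$, write $u=(u_1^*,\ldots,u_p^*)^*$ in $d$-block form, and read off the equations $(zI_{dp}-A)u=\beta$ block by block. The first $p-1$ rows yield the recursion $u_{k+1}=zu_k-\beta_k$, which unwinds to $u_{k+1}=z^ku_1-\sum_{j=1}^k z^{k-j}\beta_j$. Substituting this into the last block row $A_p u_1+A_{p-1}u_2+\cdots+A_2 u_{p-1}+(zI_d+A_1)u_p=\beta_p$ and collecting like terms gives, after a routine bookkeeping,
\begin{equation*}
 P(z)u_1 \;=\; \sum_{k=1}^{p}\Bigl(z^{p-k}\mathrm{I}_d+z^{p-k-1}A_1+\cdots+A_{p-k}\Bigr)\beta_k.
\end{equation*}
Since $u_1=\left(\mathrm{I}_d,0,\ldots,0\right)u$, the desired identity follows once one verifies that the right-hand side above equals $Q(z)$.

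The heart of the proof is then to show that the recursion (\ref{Equation 2.8}) for $\beta_{p-j}$ is exactly the one forcing this last equality. Writing the right-hand side as $\sum_{j=0}^{p-1}z^j\bigl(\sum_{i=0}^{p-1-j}A_i\beta_{p-j-i}\bigr)$ (with the convention $A_0=\mathrm{I}_d$) and comparing coefficients of $z^j$ with $Q(z)=\sum_{j=0}^{q}z^jB_{q-j}$ (using $B_i=0$ for $i<0$), one needs
\begin{equation*}
 \sum_{i=0}^{p-1-j}A_i\beta_{p-j-i}=B_{q-j},\qquad j=0,1,\ldots,p-1,
\end{equation*}
which is a direct rearrangement of (\ref{Equation 2.8}). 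I expect the main obstacle to be not conceptual but bookkeeping: carefully telescoping the block-row computation so that the coefficient of each $\beta_k$ assembles into the partial autoregressive polynomial $z^{p-k}\mathrm{I}_d+z^{p-k-1}A_1+\cdots+A_{p-k}$, and then aligning indices correctly between that expansion, the polynomial $Q(z)$, and the recursion defining $\beta$.
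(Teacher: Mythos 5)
Your proposal is correct and follows essentially the same route as the paper: both rest on the Cauchy-integral representation $e^{tA}=\frac{1}{2\pi i}\int_\rho e^{tz}(z\mathrm{I}_{dp}-A)^{-1}\,dz$, reduce the claim to the resolvent identity $\left(\mathrm{I}_d,0,\ldots,0\right)(z\mathrm{I}_{dp}-A)^{-1}\beta=P(z)^{-1}Q(z)$, and close it by showing via the recursion (\ref{Equation 2.8}) that $\sum_{k=1}^p\bigl(z^{p-k}\mathrm{I}_d+\cdots+A_{p-k}\bigr)\beta_k=Q(z)$. The only (harmless) difference is that you solve the block system $(z\mathrm{I}_{dp}-A)u=\beta$ for the first block of $u$ directly, whereas the paper exhibits the full block inverse of $z\mathrm{I}_{dp}-A$ and verifies it.
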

\begin{proof}[\textbf{Proof.}]
 By virtue of \cite[Proposition 11.2.1]{Bernstein2005} we have $e^{tA}=\frac{1}{2\pi i}\int_\rho e^{tz}(z\mathrm{I}_{dp}-A)^{-1}\,dz$ for any $t\in\bbr$ with $\rho$ being a simple closed curve in the 
 complex plane enclosing the spectrum of $A$. Setting
 \beqq
  h_{k,p}(z):=\sum\limits_{u=0}^{p-k}z^uA_{p-k-u},\quad k=1,\ldots,p,
 \label{Equation 4.8}
 \eeqq
 with $A_0:=\mathrm{I}_d$, and 
 \[r_{k}(z):=-\sum\limits_{u=0}^{k}z^uA_{p-u},\quad k=0,1,\ldots,p-2,\]
 one verifies that, for all $z$ outside of the spectrum of $A$, the $d\times d$ blocks $c_{ij}(z),\,i,j=1,\ldots,p$, of the matrix $(z\mathrm{I}_{dp}-A)^{-1}\in M_{dp}(\bbc)$ are given by
 \[c_{ij}(z)=P(z)^{-1}\left\{\begin{array}{l} z^{i-1}h_{j,p}(z),\hfill\text{if }i\leq j, \\ z^{i-j-1}r_{j-1}(z),\quad\hfill\text{if }i>j.\end{array}\right.\]
 Indeed, one can show by simple calculations that this matrix is a left inverse for $z\mathrm{I}_{dp}-A$ and thus, due to \cite[Corollary 2.6.4]{Bernstein2005}, it is the unique inverse of $z\mathrm{I}_{dp}-A$. Hence
 \[\left(\mathrm{I}_d,0_{M_d(\bbc)},\ldots,0_{M_d(\bbc)}\right)e^{tA}\beta=\frac{1}{2\pi i}\int_\rho e^{tz}P(z)^{-1}\cdot\bigg(\sum\limits_{j=1}^p h_{j,p}(z)\beta_j\bigg)\,dz.\]
 Since $B_i=0$ for all $i<0$, we obtain
 \begin{align}
  \sum\limits_{j=1}^p h_{j,p}(z)\beta_j
   &\stackrel{(\scriptsize\ref{Equation 4.8})}{=}\sum\limits_{j=1}^p\sum\limits_{u=0}^{p-j}z^uA_{p-j-u}\beta_j = \sum\limits_{u=0}^{p-1}z^u\bigg(\sum\limits_{j=1}^{p-u} A_{p-j-u}\beta_j\bigg)
    =\sum\limits_{u=0}^{p-1}z^u\bigg(\sum\limits_{j=1}^{p-u-1}A_j\beta_{p-j-u} + \beta_{p-u}\bigg) \nonumber \\
   &\stackrel{(\scriptsize\ref{Equation 2.8})}{=}\sum\limits_{u=0}^{p-1} z^uB_{q-u} = \sum\limits_{u=0}^q z^uB_{q-u} = Q(z) \nonumber
 \end{align}
 and thus (\ref{Equation 4.7}) is shown.
\end{proof}
\noindent
Now observe that the causal MCARMA process can be represented as (cf. \cite[Theorem 3.12]{Marquardtetal2007})
\[Y_t=\int_{-\infty}^t\left(\mathrm{I}_d,0_{M_d(\bbc)},\ldots,0_{M_d(\bbc)}\right)e^{(t-s)A}\beta\,L(ds),\quad t\in\bbr.\]
Since $\scrn(P)\subseteq (-\infty,0)+i\bbr$, we obtain for all $s<t$, due to Proposition \ref{Proposition 4.9} and the Residue Theorem,
\begin{align}
 \left(\mathrm{I}_d,0_{M_d(\bbc)},\ldots,0_{M_d(\bbc)}\right)e^{(t-s)A}\beta
  &=\frac{1}{2\pi i}\int_\rho \underbrace{e^{(t-s)z}P(z)^{-1}Q(z)}_{=:f(z)} dz = \sum\limits_{\substack{z\in\bbc:\,\mathrm{Re}(z)<0,\\ \mathrm{det}(P(z))=0}}\left(\mathrm{Res}_z f_{jk}\right)_
   {j,k=1,\ldots,d} \nonumber \\
  &=i\cdot\sum\limits_{\substack{z\in\bbc:\,\mathrm{Re}(z)<0,\\ \mathrm{det}(P(z))=0}}\left(\mathrm{Res}_{-iz} \wt{f}_{jk}\right)_{j,k=1,\ldots,d} \nonumber
\end{align}
with $\wt{f}(z):=e^{i(t-s)z}P(iz)^{-1}Q(iz),\,z\in\bbc$. Using then (\ref{Equation 4.4}) and (\ref{Equation 4.5}), we deduce
\[\left(\mathrm{I}_d,0_{M_d(\bbc)},\ldots,0_{M_d(\bbc)}\right)e^{(t-s)A}\beta\cdot\mathds{1}_{\left\{s<t\right\}}=\frac{1}{\sqrt{2\pi}}h(t-s)\]
and the claimed consistency follows from Corollary \ref{Corollary 6.11}.
\begin{Remark}
 Similar results in the univariate case can be found in \cite[Lemma 2.3]{Brockwelletal2009}. We have shown that equation (2.10) in that lemma extends to its multivariate version (\ref{Equation 4.7}).
\label{Remark 4.10}
\end{Remark}
\begin{appendix}
\section{Fourier Transforms on the Real Line}
\label{Section 1.3}
In this appendix we summarize important results for Fourier transforms on the real line.

The theory for complex-valued functions in $L^1$ and $L^2$, resp., is rather standard. Also the extension to complex-valued functions in $L^p$ with $p\in(1,2)$, for which the Fourier transforms can be defined by
continuity as functions in $L^q$ with $q=p/(p-1)$, is quite common. For good expositions we refer the reader to \cite{Katznelson2004} or \cite{Steinetal1971}. In the following, let us state the multivariate versions of
some well-known univariate results.

For $p\in[1,2]$ we set
\[L^p(M_d(\bbc)):=\left\{f:\bbr\to M_d(\bbc)\text{ measurable},\ \int_\bbr\left\|f(t)\right\|^p dt<\infty\right\}.\]
It is independent of the norm $\left\|\hspace{0.5mm}\cdot\hspace{0.5mm}\right\|$ on $M_d(\bbc)$ (since $M_d(\bbc)$ is a finite dimensional linear space) and it is equal to the space of functions 
$f=(f_{ij}): \bbr\to M_d(\bbc)$ where all components $f_{ij},\,i,j=1,\ldots,d$, are in the usual space $L^p(\bbc)$. We equip $L^p(M_d(\bbc))$ with the norm
$\left\|f\right\|_{L^p}:=\left(\int_\bbr\left\|f(t)\right\|^p dt\right)^{1/p}$.

The Fourier transform $\wh{f}$ of $f\in L^1(M_d(\bbc))$ is defined by 
\[\wh{f}(\xi):=\frac{1}{\sqrt{2\pi}}\int_\bbr e^{-i\xi x}f(x)\,dx,\quad\xi\in\bbr.\]
Clearly, $\wh{f}:\bbr\to M_d(\bbc)$ and $\wh{f}$ can be interpreted as the component wise Fourier transformation of $f_{ij},\,i,j=1,\ldots,d$, i.e. $\wh{f}=(\wh{f}_{ij})$. This is why, as in the univariate case, 
the Fourier transform $\wh{f}$ of $f\in L^p(M_d(\bbc)),\,1<p\leq 2$, can be defined as the limit in $L^q(M_d(\bbc)),\,q=p/(p-1)$, of the sequence 
$\frac{1}{\sqrt{2\pi}}\int_{-n}^n e^{-ix\hspace{0.5mm}\cdot\hspace{0.5mm}} f(x)\,dx$ as $\nto$. The mapping $f\mapsto\wh{f}$ is a linear continuous operator from $L^p(M_d(\bbc))$ into $L^q(M_d(\bbc))$.

Recall that for $f\in L^1(\bbc)$ and $g\in L^p(M_d(\bbc)),\,p\in[1,2]$, the convolution $h(x):=(f*g)(x):=\int_\bbr f(x-y)g(y)\,dy$ is Lebesgue-a.e. well-defined, $h\in L^p(M_d(\bbc))$ and its Fourier transform
is given by $\wh{h} = \sqrt{2\pi}\wh{f}\cdot\wh{g}$.

Moreover, we define $F_\la(x):=\la\cdot F(\la x),\ \la\in(0,\infty),$ where
\[F(x):=\frac{1}{2\pi}\left(\frac{\sin{x/2}}{x/2}\right)^2=\frac{1}{2\pi}\int_\bbr  e^{i\xi x}(1-\left|\xi\right|)\mathds{1}_{[-1,1]}(\xi)\,d\xi.\]
The family $(F_\la)_{\la\in(0,\infty)}$ is called F\'ejer kernel on the real line. It is easy to show that, for almost all $\xi\in\bbr$,
\beqq
 \wh{F_\la}(\xi) = \frac{1}{\sqrt{2\pi}}\left(1-\frac{\left|\xi\right|}{\la}\right)\mathds{1}_{[-\la,\la]}(\xi)=:\Delta_\la(\xi)
\label{Equation 1.12}
\eeqq
and thus 
\beqq
 \wh{F_\la*g}=\sqrt{2\pi}\wh{F_\la}\cdot\wh{g}=\sqrt{2\pi}\Delta_\la\cdot\wh{g}
\label{Equation 1.19}
\eeqq
for all $g\in L^p(M_d(\bbc)),\,p\in[1,2]$. Using these properties one can show the following central result in classical Fourier theory (cf. \cite[Chapter VI]{Katznelson2004}):
\begin{Theorem}
 Let $f\in L^p(M_d(\bbc))$ with $p\in[1,2]$ and $(F_\la)_{\la\in(0,\infty)}$ be the F\'ejer kernel on $\bbr$. Then 
 \[\lim\limits_{\la\to\infty}\left\|f-F_\la*f\right\|_{L^p}=0\]
 showing that $F_\la$ is an approximate convolution identity.
\label{Theorem 1.36}
\end{Theorem}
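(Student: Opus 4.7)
The plan is to reduce the matrix-valued statement to the scalar one and then invoke the classical Fejér-kernel approximation theorem on $L^p(\bbc)$. Since $M_d(\bbc)$ is a finite dimensional normed space, all norms on it are equivalent, and as noted in the excerpt a function $f=(f_{ij})$ lies in $L^p(M_d(\bbc))$ if and only if each scalar component $f_{ij}$ lies in $L^p(\bbc)$. The convolution $F_\la*f$ is, component wise, just the scalar convolution $F_\la*f_{ij}$, since $F_\la$ is scalar-valued. Consequently, choosing for convenience the norm $\|A\|_{\ell^1}:=\sum_{i,j}|A_{ij}|$ on $M_d(\bbc)$, we have $\|f-F_\la*f\|_{L^p}^p\leq \mathrm{const}\cdot\sum_{i,j}\|f_{ij}-F_\la*f_{ij}\|_{L^p(\bbc)}^p$, so it suffices to prove the scalar version of the theorem.

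For scalar $f\in L^p(\bbc)$, $p\in[1,2]$, the first step is to record the three properties of the Fejér kernel that make it an approximate identity: (a) $F_\la\geq 0$, (b) $\int_\bbr F_\la(x)\,dx=1$, and (c) for every $\delta>0$, $\int_{\{|x|>\delta\}}F_\la(x)\,dx\to 0$ as $\la\to\infty$. Properties (a) and (b) follow directly from the explicit formula $F(x)=\frac{1}{2\pi}\bigl(\frac{\sin(x/2)}{x/2}\bigr)^2$ (with (b) obtained e.g.\ from (\ref{Equation 1.12}) at $\xi=0$), and (c) follows from the scaling $F_\la(x)=\la F(\la x)$ together with the integrability of $F$ at infinity via the substitution $y=\la x$.

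Next I would bound $\|f-F_\la*f\|_{L^p}$ by writing
\[
(F_\la*f)(x)-f(x)=\int_\bbr F_\la(y)\bigl(f(x-y)-f(x)\bigr)\,dy
\]
using property (b), and then applying Minkowski's integral inequality in $L^p(\bbr)$ to obtain
\[
\|F_\la*f-f\|_{L^p}\leq \int_\bbr F_\la(y)\,\|\tau_y f-f\|_{L^p}\,dy,
\]
where $\tau_y f(x)=f(x-y)$. The classical fact that translation is strongly continuous on $L^p(\bbc)$ for $1\leq p<\infty$ (proved by density of $C_c(\bbr)$ in $L^p$ together with uniform continuity of continuous compactly supported functions) yields: given $\varepsilon>0$, choose $\delta>0$ such that $\|\tau_y f-f\|_{L^p}<\varepsilon$ whenever $|y|<\delta$. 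Splitting the integral at $|y|=\delta$, the inner region contributes at most $\varepsilon\int_{\{|y|<\delta\}}F_\la(y)\,dy\leq\varepsilon$, while the outer region is bounded by $2\|f\|_{L^p}\cdot\int_{\{|y|\geq\delta\}}F_\la(y)\,dy$, which tends to $0$ as $\la\to\infty$ by (c). Taking $\limsup_{\la\to\infty}$ gives a bound of $\varepsilon$ for every $\varepsilon>0$, which concludes the proof.

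The only mildly delicate ingredient is the strong continuity of translation in $L^p$; everything else is either the reduction step or direct computation with $F_\la$. Since this continuity is textbook material and fails only at $p=\infty$ (which is outside the range considered), I do not expect any real obstacle in the argument.
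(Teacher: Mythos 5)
Your proposal is correct: the componentwise reduction is legitimate, and the scalar argument (positivity, unit mass, and mass concentration of $F_\la$, combined with Minkowski's integral inequality and strong continuity of translations in $L^p$ for $p<\infty$) is exactly the classical summability-kernel proof. The paper itself gives no proof but defers to Katznelson, where the argument is the same one you present, so there is nothing to add.
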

\end{appendix}
%
\section*{Acknowledgments}
Both authors are grateful for financial support of the Technische Universit\"at M\"unchen - Institute for Advanced Study, funded by the German Excellence Initiative. 
Moreover Florian Fuchs gratefully acknowledges the support of the TUM Graduate School's International School of Applied Mathematics.
%
%

%
\end{document}